\documentclass[11pt,leqno,oneside,a4paper]{amsart}
\usepackage{amsmath}
\usepackage{amsthm}
\usepackage{amsfonts}
\usepackage{amssymb}
\usepackage{xcolor}
\usepackage{paralist}       % improved enumerate and itemize
\usepackage[mathcal]{euscript}

\usepackage{xspace}
\usepackage{mathtools}
\usepackage{hyperref}
\usepackage{natbib}
\hypersetup{
     breaklinks=true,
     colorlinks=true,
     linkcolor=teal,
     citecolor=violet}
\usepackage{graphicx}
\usepackage{todonotes}

\colorlet{darkgreen}{green!80!black}

\theoremstyle{plain}
\newtheorem{theorem}{Theorem}[section]
\newtheorem{lemma}[theorem]{Lemma}

\newtheorem{proposition}[theorem]{Proposition}
\newtheorem*{proposition*}{Proposition}

\theoremstyle{definition}

\newtheorem{definition}[theorem]{Definition}
\newtheorem{example}[theorem]{Example}

\numberwithin{theorem}{section}
\numberwithin{equation}{section}

\newenvironment{myproof}{
  \par\medskip\noindent
  \textit{Proof}.
}{
\newline
\rightline{$\square$}
}

\DeclarePairedDelimiter\abs{\lvert}{\rvert}
\DeclarePairedDelimiter\nrm{\|}{\|}
\DeclarePairedDelimiter\set{\lbrace}{\rbrace}

\DeclarePairedDelimiter\floor{\lfloor}{\rfloor}

\def\measurable{\mathcal{M}}

\newcommand{\N}{\mathbb{N}}
\newcommand{\R}{\mathbb{R}}

\newcommand{\lgn}{\mathcal L}

\def\ls{\lesssim}
\def\gs{\gtrsim}

\newcommand{\Wlgn}{W_{\kern-1.2pt\lgn}\kern0.7pt}

\newcommand{\pe}\relax
\newcommand{\pb}\relax
\newcommand{\emphm}\emph

\makeatletter
\DeclareRobustCommand\onedot{\futurelet\@let@token\@onedot}
\def\@onedot{\ifx\@let@token.\else.\null\fi\xspace}
 
\def\ie{i.e\onedot}

\def\ri{r.i\onedot} 
\makeatother

\makeatletter
\def\paragraph{\bigskip\@startsection{paragraph}{4}%
   \z@\z@{-\fontdimen2\font}%
   {\normalfont\bfseries}}
\makeatother

\title{On certain fine properties of sequence spaces}
\author{Hanuš Kameník}
\begin{document}

\maketitle

\textbf{Institution: }Charles University, Faculty of Mathematics and Physics, Department of Mathematical Analysis, Prague, Czech Republic

\textbf{e-mail address: }kamenik@karlin.mff.cuni.cz

\textbf{ORCID: }0009-0005-6357-5009

\textbf{Keywords:} Orlicz spaces, Lorentz spaces, Lorentz-Zygmund spaces, embeddings

\textbf{MSC2010 Code:} 46E30

\section{Introduction}

The theory of function spaces and sequence spaces is one of the most rapidly developing parts of functional analysis. Many of its applications boil down to some properties of an operator on certain function space.

The most important motivation for studying function spaces is of course the transfer of regularity of solutions for partial differential equations. Traditionally the main role had Lebesgue spaces, but there are many examples, which show that the scale of Lebesgue spaces is not fine enough. Those problems require more finer or general scales of function spaces. 

For this reason many extensions and generalizations of Lebesgue spaces were introduced in the first part of the twentieth century. Notably the Zygmund classes of logarithmic and exponential type, the Lorentz spaces by introducing a second parameter and the Orlicz space depending on a Young function, which is a generalization of a power function governing the Lebesgue space. The construction of an Orlicz space is relatively simple by the usage of the Minkowski functional creating a Luxemburg norm, on the other hand the Lorentz space requires the symmetrization of a function into its non-increasing rearrangement. Both Lorentz and Orlicz spaces are rearrangement invariant, but as it turns out, the extensions are quite orthogonal and the two spaces only meet at the Lebesgue scale.

In 1980, Bennett and Rudnick~\cite{Ben:80} introduced the three-parameter scale of the~\textit{Lorentz–Zygmund spaces}, in order to exploit some properties of Orlicz spaces while retaining the Lorentz scale. Those spaces and their generalized version were extensively treated Opic and Pick \cite{Opick:99}. One of the many answers was the characterization of the intersection with Orlicz spaces. The paper takes into account only spaces above a non-atomic measure space, but the question for fully atomic spaces was left unanswered. 

This is where this paper hopes to fill the gap. The theory of function spaces over non-atomic measure has been treated in depth, the sequence counterpart has not been developed as much and there in not much literature diving very deep into the theory of sequence spaces. Some authors, notably G. Bennett and K.G. Grosse-Erdmann have participated in the development of sequence spaces \cite{Ben:05,Ben:06}. It is well known that the sequence counterpart is not just a simplification of the function theory, but, on the contrary, many methods often used for function spaces lose their effectiveness or could not be used at all. This forces us to use and develop new methods and approach some problems completely differently. The subject of sequence spaces was treated in several monographs, see e.g.~\cite{Lin:73,Gro:98} 

In this paper we answer, among others, the question where the Orlicz and the Lorentz--Zygmund sequence scales meet, see Theorem \ref{T: Charakterization-of-intersection}. Aside from this principal result, we prove a fair number of useful properties of general sequence spaces  and of the two very important scales, namely Lorentz--Zygmund spaces and Orlicz spaces, in particular. Our achievements include evaluation of the fundamental sequences of the spaces in question and an intensive study of their equivalent (quasi)-normability. We also establish their associate spaces, and more.

The paper can be rendered as a contribution towards the recently very dynamically developing field of sequence structures, which in turn reflects challenging contemporary problems whose solution might involve discrete inequalities and fine properties of sequences  (for an excellent survey for this direction of research, see~\cite{Pie:09}).

The paper is structured as follows. The next section is devoted to preliminary material on rearrangements, while the third section contains the foundations of Banach sequence spaces. The subsequent two sections are dedicated to a detailed study of Orlicz and Lorentz--Zygmunds sequence spaces, respectively. Our main result, that is, a detailed treatment of mutual relations between the latter two classes, is contained in the final section.

\section{Rearrangement and distribution function}

For the whole paper, we consider a measurable space \((\mathbb{N},\mathcal{P}(\mathbb{N}),m)\), where $m$ is an arithmetic measure. Note that $m$ is $\sigma$-finite and every sequence 
\[
\{a_n\}_{n=1}^\infty\colon \mathbb{N}\rightarrow\mathbb{R}^*
\]
is measurable. Let $\mathcal{M}$ be a set of all such sequences $\{a_n\}_{n=1}^\infty$.

The distribution function $a_*\colon [0,\infty)\rightarrow [0,\infty]$ of $a=\{a_n\}_{n=1}^\infty \in\mathcal{M}$ is defined as
\begin{equation*}
    a_*(\lambda) = m(\{n\in\mathbb{N}: |a_n|>\lambda\}).
\end{equation*}
Note that the distribution function $a_*$ has its values in whole numbers and infinity, it is non-increasing, right-continuous, and for any $\lambda>0$: $(\frac{a}{\lambda})_*(\tau) = a_*(\lambda \tau)$. 

The non-increasing rearrangement of the sequence $a$ is defined as:    
\begin{equation*}
    a_n^* = a^*(n-1)
\end{equation*}
where
\begin{equation*}
    a^*(t) = \inf\{\lambda\in[0,\infty):a_*(\lambda)\leq t\}.
\end{equation*}
and the maximal sequence $a^{**}$ as:
\begin{equation*}
    a^{**}_n = \frac{1}{n}\sum_{k=1}^n a_k^*
    \quad \text{for $n\in\N$.}
\end{equation*}

As in the theory of function spaces, a desired property is sub-additivity, unfortunately, the non-increasing rearrangement is not sub-additive, but similarly as in the continuous case we have a different, weaker version stated bellow, which is proven by using the standard dilatation argument and basic properties of rearrangement.
For $a,b\in \measurable$ it holds that
\begin{equation*}
    (a+b)^*_n \leq a^*_{\floor{\frac{n+1}{2}}} + b^*_{\floor{\frac{n+1}{2}}}.  
\end{equation*}
On the other hand, the sub-additivity holds for the operator of maximal sequence in the following sense.
For $a,b\in\measurable$ we have
\begin{equation*}
    (a+b)^{**}_n \leq a^{**}_n + b^{**}_n 
    \quad \text{for $n\in\N$.}
\end{equation*}

An important thing to note is that the rearrangement is not a function, but a sequence numbered from one. This results in only one singularity, which is at infinity and makes some spaces behave quite differently than their function counterpart. 

Here are some basic properties of non-increasing rearrangement used through the whole paper.

\begin{proposition} \label{P:Properties-of-rearrangement}
    Let $a, b\in\mathcal{M}, \{a^m\}_{m = 1}^\infty \subset \mathcal{M}$. Then
    \begin{enumerate} 
        \item[\textup{(i)}] if $a_n\geq a_{n+1} \geq 0$ for every $n\in\N$, then $a = a^*$, \label{P:rearrangement-of-non-increasing}
        \item[\textup{(ii)}]$a_n^* = \sup\{\min_{k\in E} \abs{a_k}: E\subset \N , m(E) = n\}$ 
            \quad for every $n\in\N$. \label{P:Properties-of-rearrangement-1}
        \item[\textup{(iii)}] If $0\leq a \leq b$, then $a^* \leq b^*$. \label{P:Properties-of-rearrangement-2}
        \item[\textup{(iv)}] If $0 \leq a^m \nearrow a$ point--wise, then $(a^m)^* \nearrow a^*$. \label{P:Properties-of-rearrangement-3}
    \end{enumerate}
\end{proposition}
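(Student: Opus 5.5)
The plan is to prove (ii) first and derive the other items from it; the main work is the identity (ii) itself.

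For (ii), fix $n\in\N$ and write $a^*_n=a^*(n-1)=\inf\{\lambda\ge 0: a_*(\lambda)\le n-1\}$. Let $S$ denote the supremum on the right-hand side.

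For $S\le a_n^*$, take $E\subset\N$ with $m(E)=n$ and set $\mu=\min_{k\in E}\abs{a_k}$. For every $\lambda<\mu$, all $n$ indices of $E$ satisfy $\abs{a_k}>\lambda$. Hence $a_*(\lambda)\ge n>n-1$, so $\lambda$ is not admissible in the infimum. Since $a_*$ is non-increasing, every admissible $\lambda$ satisfies $\lambda\ge\mu$, and therefore $a_n^*\ge\mu$.

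For $a_n^*\le S$, let $\lambda<a_n^*$. Then $\lambda$ is not admissible, so $a_*(\lambda)\ge n$. Thus there are at least $n$ indices with $\abs{a_k}>\lambda$; choosing $n$ of them gives a set $E$ with $\min_{k\in E}\abs{a_k}>\lambda$, hence $S>\lambda$. Letting $\lambda\nearrow a_n^*$ gives $S\ge a_n^*$. The case $a_n^*=0$ is trivial, and the case $a_n^*=\infty$ is covered by the same argument.

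This is the step I expect to be the main obstacle. The care needed is with the shift $n-1$ and with the strict inequalities, and right-continuity of $a_*$ should not be needed.

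The remaining items follow from (ii).

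\textbf{Item (iii).} If $0\le a\le b$, then for every $E$ we have $\min_{k\in E}a_k\le\min_{k\in E}b_k$. Taking suprema in (ii) gives $a^*\le b^*$.

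\textbf{Item (i).} For a non-increasing non-negative sequence and any $E$ with $m(E)=n$, the largest index in $E$ is at least $n$. Monotonicity then gives $\min_{k\in E}a_k\le a_n$. The choice $E=\{1,\dots,n\}$ attains the value $a_n$, so (ii) yields $a_n^*=a_n$.

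\textbf{Item (iv).} By (iii), $(a^m)^*_n$ is non-decreasing in $m$ and bounded by $a_n^*$, so its limit $L\le a^*_n$ exists. For the reverse inequality, fix $\lambda<a^*_n$ and use (ii) to choose a finite $E$ with $m(E)=n$ and $\min_{k\in E}\abs{a_k}>\lambda$. Because $E$ is finite and $a^m_k\nearrow a_k$ for each $k\in E$, there is $M$ such that $\min_{k\in E}a^m_k>\lambda$ for all $m\ge M$. By (ii) again, $(a^m)^*_n>\lambda$ for these $m$, so $L\ge\lambda$. Letting $\lambda\nearrow a_n^*$ gives $L=a_n^*$. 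Finiteness of $E$ is exactly what lets the pointwise convergence pass through the minimum, and it is why (ii) is the convenient tool here.
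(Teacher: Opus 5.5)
Your proof is correct and, for (ii)--(iv), essentially the paper's: you use the same two-sided argument for the formula in (ii), read (iii) off from it, and base (iv) on the finiteness of $E$ to pass the limit through the minimum (the paper interchanges the suprema and picks, by pigeonhole, an index $i\in E$ realizing the minimum for infinitely many $m$, where you use a threshold argument). The only real difference is (i): the paper proves it directly from the definition of $a^*$ using right-continuity of $a_*$, whereas you obtain it in two lines from (ii) without needing right-continuity.
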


\begin{myproof}
    (i) Let $n\in\N$, then, from the definition of the infimum, we find $\lambda_i \rightarrow a_n^*$ that satisfy $\forall i\in\N: \lambda_i\in \{\lambda\geq 0: a^*(\lambda)\leq n-1\}$. Because $a_*$ is right-continuous, one has $a_*(a_n^*) = \lim_{i\to \infty } a_*(\lambda_i)\leq n-1$, so
    \begin{equation*}
        m(\{k\in\N: a_k>a_n^*\})\leq n-1.
    \end{equation*}
    Therefore, from the monotonicity of $a$, we have
    \begin{equation*}
        \exists n_0\leq n-1, \forall k>n_0: a_k\leq a_n^*.
    \end{equation*}
    Especially, for $k=n$, we get $a_n\leq a_n^*$.

    From the monotonicity of $a$ we get $m(\{k\in\N: a_k>a_n\})\leq n-1$, therefore, it follows from the definition of $a_n^*$ that $a_n\geq a_n^*$.
    
    (ii) For $n\in \N$ set $c_n = \sup\{\min_{k\in E} \abs{a_k}: E\subset \N , m(E) = n\}$. To prove $a_n^* \leq c_n$ it is sufficient to show $a_*(c_n)\leq n-1$. For a contradiction, suppose that
    \begin{equation*}
        m\{i\in\N : \abs{a_i} > \sup_{m(E) = n} \min_{k\in E} \abs{a_k}\} \geq n.
    \end{equation*}
    Therefore, there is $F\subset\N, m(F) = n$ such that 
    \begin{equation*}
        \abs{a_i} > \sup_{m(E) = n} \min_{k\in E} \abs{a_k}
        \quad \text{for $i\in F$.}
    \end{equation*}
    Especially, we have
    \begin{equation*}
        \min_{i\in F}\abs{a_i} > \sup_{m(E) = n} \min_{k\in E} \abs{a_k},
    \end{equation*}
    which gives us the contradiction. The desired inequality now follows from the definition of $a^*$.

    Let $\varepsilon >0$. We find $E\subset \N$ such that $m(E) = n$ and $\min_{k\in E} \abs{a_k}> c_n -\varepsilon$. Then, we have
    \begin{equation*}
        m\{i\in \N : \abs{a_i} > c_n -\varepsilon\} \geq m\{i\in \N : \abs{a_i} \geq \min_{k\in E} \abs{a_k} \} \geq m(E) = n.
    \end{equation*}
    From this and the definition of $a^*$, we get $c_n - \varepsilon \leq a_n^*$. By taking $\varepsilon \rightarrow 0$, we obtain the second inequality.

    (iii) For $n\in \N$ from (ii), we immediately have
    \begin{equation*}
        a_n^* = \sup\{\min_{k\in E} \abs{a_k}: E\subset \N , m(E) = n\} \leq \sup\{\min_{k\in E} \abs{b_k}: E\subset \N , m(E) = n\} = b_n^*.
    \end{equation*}

    (iv) From the monotonicity of the convergence, (ii) and (iii) we get
    \begin{equation*}
        \lim_{m\to\infty} (a_n^m)^* = \sup_{m\in \N} \sup_{m(E)=n} \min_{k\in E} \abs{a_k^m} = \sup_{m(E)=n} \sup_{m\in \N}  \min_{k\in E} \abs{a_k^m}. 
    \end{equation*}
    To prove (iv), it is sufficient to show that
    \begin{equation*}
        \sup_{m\in \N}  \min_{k\in E} \abs{a_k^m} = \min_{k\in E} \abs{a_k}
        \quad \text{for $E\subset\N, m(E) = n$},
    \end{equation*}
    and then, by using (ii), we get (iv). Let $E\subset\N$ such that $m(E) = n$. The inequality $\sup_{m\in \N}  \min_{k\in E} \abs{a_k^m} \leq \min_{k\in E} \abs{a_k}$ follows immediately from $a^m \nearrow a$. Because $E$ is finite, there is $G\subset\N$ and $i\in E$ such that $G$ is infinite and $\min_{k\in \N} \abs{a_k^m} = a_i^m$ for every $m\in G$. From this we have
    \begin{equation*}
        \sup_{m\in \N}  \min_{k\in E} \abs{a_k^m} \geq \sup_{m\in G}  \min_{k\in E} \abs{a_k^m} \geq \sup_{m\in G} \abs{a_i^m} = a_i \geq \min_{k\in E} \abs{a_k}.
    \end{equation*}
    So the proof is complete.
\end{myproof}

\section{Banach sequence spaces and the fundamental sequence}

In this paper we use certain practical notation. To avoid confusion, we state the definitions here. If there exists a positive constant $C$ such that for two non-negative quantities $x$ and $y$ we have $x\leq Cy$, then we write $x \ls y$. If $x \ls y$ and $y\ls x$ we write $x\approx y$. 
If two functions $F,G\colon \, [0,\infty) \rightarrow [0,\infty)$ and some positive constants $c,C$ satisfy $F(ct) \leq G(t) \leq F(Ct)$ on some neighborhood of 0, then we denote this fact by $F \sim G$.

For $E\subset \N$, we define its characteristic sequence $\chi_E= \{\chi_E(n)\}_{n=1}^\infty$ as
\begin{equation*}
    \chi_E(n) = 
    \begin{cases}
    1
    &\text{if $n\in E$,}
    \\
    0
    &\text{otherwise.}
    \end{cases}
\end{equation*}

For $p\in (0,\infty]$ we define the Lebesgue (quasi-)norm as
\begin{equation*}
    \nrm{a}_p = \begin{cases}
                    \left(\sum_{n=1}^\infty \abs{a_n}^{p} \right)^\frac{1}{p} 
                    &\text{if $p<\infty$,}
                    \\
                    \sup_{n\in\N} \abs{a_n} 
                    &\text{if $p=\infty$,}
                \end{cases}
\end{equation*}
and the Lebesgue space $(\ell^p,\nrm{\cdot}_p)$ as
\begin{equation*}
    \ell^p = \{ a\in\measurable: \nrm{a}_p < \infty\}
\end{equation*}
with $\nrm{\cdot}_p$ as its (quasi)norm.

For the whole paper we consider the convention $\frac{1}{\infty}=0$. For $p\in [1,\infty]$ we denote by $p'$ the number satisfying $\frac{1}{p}+\frac{1}{p'} = 1$.

For better understanding of Lorentz--Zygmund and Orlicz spaces we first need to define general sequence space with special properties. All those properties come from the theory of Lebesgue spaces and are the foundation of its applications. If those properties hold, we can then use similar methods to prove different results for a wide variety of spaces, which are not Lebesgue spaces.

\begin{definition}\label{D:BSS}
    Let $\| \cdot \|_X:\, \mathcal{M} \rightarrow [0,\infty]$. If the following axioms hold:
    \begin{enumerate} [({P}1)]
        \item $\| \cdot \|_X$ is a norm and $\| a \|_X = \| \abs{a} \|_X$ whenever $a\in \mathcal{M}$, \label{D:BSS-abs}
        \item if $a,b \in \mathcal{M}$ are such that $0 \leq a \leq b$, then $\| a \|_X \leq \| b \|_X$,\label{D:BSS-monotonie}
        \item if $a^n, a \in \mathcal{M}$ and $0\leq a^n \nearrow a$ (point-wise), then $\| a^n \|_X \nearrow \| a \|_X$, \label{D:BSS-konvergence}
        \item $\| \chi_E \|_X < \infty$ whenever $m(E) <\infty$, \label{D:BSS-char. posloupnost}
        \item if $m(E) < \infty$, then there is a constant $C_E$ such that $\sum_{n\in E} \abs{a_n} \leq C_E \| a \|_X$ for all $a \in X$, \label{D:BSS-lok.l1}
    \end{enumerate}
    then we call such functional $\| \cdot \|_X$ a Banach sequence norm and the space $(X,\nrm{\cdot}_X)$, where $X= \set{a\in\measurable: \| \cdot\|_X < \infty}$ a Banach sequence space (BSS).
\end{definition}

As it turns out, these axioms are too strong to cover every desirable sequence space, especially the requirement for the defining functional to be a norm, thus we introduce a quasi-norm and with it a quasi-Banach sequence space as a weaker alternative. 

\begin{definition} \label{qBSS}
    Let $\| \cdot \|_X:\, \mathcal{M} \rightarrow [0,\infty]$ such that (P2)-(P5) hold and also:
    \begin{enumerate} [({Q}1)]
        \item $\nrm{a}_X=0$ iff $a=0$, $\nrm{ca}_X=\abs{c}\nrm{a}_X$ for every $c\in\R$ and there exists constant $C>0$, such that for every $a,b\in\measurable: \, \nrm{a+b}_X\leq C (\nrm{a}_X+\nrm{b}_X)$ and $\| a \|_X = \| \abs{a} \|_X$ whenever $a\in \mathcal{M}$.
    \end{enumerate}
    Then we call such functional a Banach sequence quasi-norm and the space$(X,\nrm{\cdot}_X)$, where $X = \set{a\in\measurable: \| \cdot\|_X < \infty}$ a quasi-Banach sequence space (qBSS).
\end{definition}

The weakening of the norm assumption to just a quasi-norm gained popularity quite recently and had been treated by A. Musilová, A. Nekvinda, D. Peša, and H. Turčinová ~\cite{Nek:24,Mus:25}.

One of many important properties of Lebesgue spaces is the rearrangement invariance. It is not hard to see that the Lebesgue norm depends only on the distribution function of the sequence. The spaces we consider in this paper are a generalization of Lebesgue spaces, so it is natural to require a similar property.

\begin{definition}\label{D:RI}
    Let $(X,\| \cdot \|_X)$ be a (quasi)-Banach sequence space. We will say that X is rearrangement-invariant (\ri) if $\| a \|_X = \| b \|_X$ whenever $a_* = b_*$, $a,b \in \mathcal{M}$.
\end{definition}

For sequences the axiom (P\ref{D:BSS-lok.l1}) is trivial, but for consistency with the theory of function spaces it is included in the definition. (P\ref{D:BSS-lok.l1}) follows from (P\ref{D:BSS-monotonie}), (P\ref{D:BSS-char. posloupnost}), positive homogeneity and \ri by the following calculation: 
\begin{align*}
    \sum_{n\in E} \abs{a_n} &\leq m(E) \max_{n\in E} |a_n| \leq m(E) \|a\|_\infty = m(E) a^*_1  = m(E) a_1^* \frac{\|\chi_{[1,1]}\|_X}{\|\chi_{[1,1]}\|_X} \\
    &= m(E) \frac{\|a_1^*\chi_{[1,1]}\|_X}{\|\chi_{[1,1]}\|_X} \leq m(E) \frac{\|a^*\|_X}{\|\chi_{[1,1]}\|_X} = m(E) \frac{\|a\|_X}{\|\chi_{[1,1]}\|_X}, 
    \quad \text{for $a\in\measurable$.}
\end{align*}

The structure of Banach sequence spaces and Banach function spaces is very similar, only differing on the underlying measure space, thus many classical results are applicable for our case. Very good foundation for general Banach function spaces is \cite[Chapter 1 and 2]{Ben:88} and in this paper it is used extensively.

If $X$ and $Y$ are qBSS such that $\nrm{\cdot}_X\ls \nrm{\cdot}_Y$, then we say $Y$ is embedded into $X$, writing $Y\hookrightarrow X$. If $Y \hookrightarrow X$ and $X \hookrightarrow Y$, we say the two spaces coincide, writing $X=Y$.

For the purposes of the theory of sequence spaces the classical dual is very complicated object, therefore we introduce an associate space as its alternative. It turns out that for some spaces its dual and its associate space are isomorphic. For our matter it is more important the second motivating property, which is the generalized H\"older inequality \eqref{E:Gener_Holder}. For $X$ a (quasi)-Banach space we define its associate norm as:
\begin{equation*}
    \| a\|_{X'} = \sup\left\{\sum_{n=1}^\infty \abs{a_n b_n}: b\in X, \|b\|_X \leq 1 \right\}
\end{equation*}
and the associate space $(X',\|\cdot\|_{X'})$ as $X'=\{ a\in \mathcal{M}: \| a\|_{X'}<\infty\}$.

Then for $a,b\in\mathcal{M}$ we have:
\begin{equation}\label{E:Gener_Holder}
    \sum_{n=1}^\infty a_n b_n \leq \|a\|_X \|b\|_{X'}.
\end{equation}

One of the most essential tools for research of \ri sequence spaces is the fundamental sequence. Its growth near infinity is a great indicator of many properties of the given space, in some cases it characterizes the space completely (see Proposition \ref{P:embeding-and-fundamental-sequence}). 

\begin{definition}\label{D:Fundamental sequence}
    Let $(X,\| \cdot \|)$ be a rearrangement-invariant sequence space, then for $n\in \N_0$ we define
    \begin{equation*}
        \varphi_X (n) = \| \chi_{E_n} \|_X 
        \quad \text{for $m(E_n) = n$}.
    \end{equation*}
    We call such $\varphi_X$ the fundamental sequence of X.
\end{definition}

Because $X$ is rearrangement invariant, the definition is consistent for any choice of the set $E_n$, therefore we can always put $E_n = [1,n] \cap \N$.

The fundamental sequence of BSS has some very important properties shown in \cite[Chapter 2.5]{Ben:88}, most importantly the quasi-concavity and a relation between fundamental sequences and its associate counterpart.

%\begin{proposition}\label{P:fundamental-quasi-concavity}
 %   Let $X$ be a rearrangement-invariant BSS, then the fundamental sequence satisfies:
  %  \begin{enumerate} [(i)]
   %     \item $\varphi_X(n) = 0$ iff $n = 0$,
    %    \item $\varphi_X$ is non-decreasing,
    %    \item the sequence $n \mapsto \frac{\varphi_X(n)}{n}$ is non-increasing.
   % \end{enumerate}
%\end{proposition}

%\begin{proposition} \label{P:fundamental-sequence-associate-space} \color{magenta}
 %   Let $X$ be as BSS and $X'$ its associate space, then the fundamental sequences satisfy:
  %  \begin{equation*}
   %     \varphi_X(n) \varphi_{X'}(n) = n.
   % \end{equation*}
%\end{proposition}

%The proposition comes from \cite[Theorem~I.5.2]{Ben:88}.

\medskip

\begin{proposition} \label{P:embeding-and-fundamental-sequence} 
    Let $(X,\| \cdot \|_X)$ be an r.i qBSS. Then $X\hookrightarrow \ell^\infty$ and, moreover, $X = \ell^\infty$ iff $\varphi_X \approx 1$.
\end{proposition}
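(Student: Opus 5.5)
The proof splits into the embedding $X\hookrightarrow\ell^\infty$ and the characterization of $X=\ell^\infty$. Both parts use only (P\ref{D:BSS-monotonie}), (P\ref{D:BSS-konvergence}), positive homogeneity from (Q1), and rearrangement invariance.

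\emph{Embedding.} Take $a\in\measurable$ and $n\in\N$ with $a_n\neq 0$. Since $\abs{a}\ge \abs{a_n}\chi_{\{n\}}\ge 0$ pointwise, (P\ref{D:BSS-monotonie}) and homogeneity give
\[
\abs{a_n}\,\nrm{\chi_{\{n\}}}_X\le \nrm{\abs{a}}_X=\nrm{a}_X .
\]
The sequences $\chi_{\{n\}}$ and $\chi_{\{1\}}$ have the same distribution function, so rearrangement invariance gives $\nrm{\chi_{\{n\}}}_X=\varphi_X(1)$. This number is positive because $\chi_{\{1\}}\ne0$ (Q1), and finite by (P\ref{D:BSS-char. posloupnost}). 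Taking the supremum over $n$ yields $\nrm{a}_\infty\le \varphi_X(1)^{-1}\nrm{a}_X$, which is the embedding. The same computation already appears in the paper's verification of (P\ref{D:BSS-lok.l1}).

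\emph{Necessity.} Assume $X=\ell^\infty$, so that $\nrm{a}_X\ls\nrm{a}_\infty$. Applying this to $a=\chi_{[1,n]}$ gives $\varphi_X(n)\ls 1$. Conversely, $\varphi_X(n)\ge\varphi_X(1)>0$ by (P\ref{D:BSS-monotonie}), since $\chi_{[1,1]}\le\chi_{[1,n]}$. Hence $\varphi_X\approx1$. (The value $\varphi_X(0)=0$ is excluded; $\varphi_X\approx 1$ is read for $n\ge1$.)

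\emph{Sufficiency.} Assume $\varphi_X(n)\le C$ for all $n$. By the first part it suffices to show $\nrm{a}_X\ls\nrm{a}_\infty$ for $a\in\ell^\infty$. Pointwise $\abs{a}\le\nrm{a}_\infty\chi_{\N}$, so (P\ref{D:BSS-monotonie}) gives $\nrm{a}_X\le\nrm{a}_\infty\nrm{\chi_\N}_X$. Now $0\le\chi_{[1,n]}\nearrow\chi_\N$, so by (P\ref{D:BSS-konvergence})
\[
\nrm{\chi_\N}_X=\lim_{n\to\infty}\varphi_X(n)\le C .
\]
Therefore $\nrm{a}_X\le C\nrm{a}_\infty$, and $X=\ell^\infty$.

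The one step needing care is passing from the finite sets $[1,n]$ to all of $\N$. This is exactly where the Fatou-type axiom (P\ref{D:BSS-konvergence}) is required; without it, bounded $\varphi_X$ would not control $\chi_\N$. The remaining care is checking that $\varphi_X(1)>0$ and finite, so that the division in the embedding step is legitimate. No sub-additivity is used, so the quasi-norm constant of (Q1) never enters.
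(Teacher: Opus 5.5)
Your proof is correct and follows the paper's argument: the embedding comes from monotonicity and homogeneity against a one-point characteristic sequence, and sufficiency comes from $\abs{a}\le\nrm{a}_\infty\chi_\N$ together with (P3). The only small deviations are these: you work with $\chi_{\{n\}}$ and rearrangement invariance for singletons instead of passing through $a^*$ and $a_1^*=\nrm{a}_\infty$, and you prove necessity directly by testing the norm equivalence on $\chi_{[1,n]}$, whereas the paper shows $\chi_\N\in\ell^\infty$ but $\chi_\N\notin X$ when $\varphi_X\to\infty$.
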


\begin{myproof}
    Let $a\in X$. Then, by using Proposition~\ref{P:Properties-of-rearrangement} (ii) and the properties of $X$, we get
    \begin{equation*}
        \| a\|_\infty = \sup_{n\in \N} \abs{a_n} = a_1^*
        = a_1^* \frac{\| \chi_{\{1\}}\|_X}{\| \chi_{\{1\}}\|_X} 
        = \frac{\| a_1^* \chi_{\{1\}}\|_X}{\| \chi_{\{1\}}\|_X}
        \leq \frac{\| a^* \|_X}{\varphi_X(1)}
        = \frac{\| a \|_X}{\varphi_X(1)},
    \end{equation*}
    proving $X\hookrightarrow \ell^\infty$.

    Now suppose $\varphi_X \approx 1$, then $\varphi_X \leq C$ and thus for $a\in X$ we have
    \begin{equation*}
        \| a \|_X \leq \| \|a\|_\infty \chi_\N \|_X = \lim_{n\to \infty} \|a\|_\infty \| \chi_{[1,n]\cap \N} \|_X \leq C \|a\|_\infty.
    \end{equation*}
    Therefore $ \ell^\infty \hookrightarrow X$.
    
    If $\varphi_X \rightarrow \infty$ then:
    \begin{equation*}
        \| \chi_\N \|_X = \lim_{n\to \infty} \| \chi_{[1,n]\cap \N} \|_X = \infty,
    \end{equation*}
    but $\| \chi_\N \|_\infty = 1$. The existence of the limits comes from monotonicity of the fundamental sequence which is a direct consequence of (P2).
\end{myproof}

\section{Orlicz spaces}

The first class of sequence spaces we will work with are Orlicz spaces. The general idea is to replace the power function from the definition of Lebesgue norm with a more general Young function, giving us wider possibilities to generate more interesting spaces.

We say that a function \(A\colon [0,\infty] \rightarrow [0,\infty]\) is a Young function, if it meets the following:
\begin{enumerate}[(i)]
    \item is left continuous on \([0,\infty)\),
    \item is convex on \([0,\infty)\),
    \item $A(0)=0$,
    \item is not constant on \((0,\infty)\).
\end{enumerate}

It follows from these axioms that a Young function A is non-decreasing, $\lim_{x\to\infty} A(x)= \infty$ and it is continuous it where is finite valued. 

Every Young function could be expressed as an integral of some function $f\colon [0,\infty) \rightarrow [0,\infty]$ in the sense that
\begin{equation} \label{E:derivace_Young}
    A(t) = \int_0^t f(\tau) d\tau
    \quad \text{for $t\in [0,\infty]$.}
\end{equation}
Such function is called density function of $A$ and the function $f$ is non-decreasing and non-constant on $[0,\infty)$ and $f(0) = 0$. Moreover if $f$ satisfies those properties, then the function $A$ defined by \eqref{E:derivace_Young} is a Young function.  

For a Young function $A$ and its density function $f$ we define the complementary function $\widetilde{A}$ of the function $A$ as:
\begin{equation*}
    \widetilde{A}(t) = \int_0^t f^{-1}(\tau) d\tau \quad \text{for $t\in [0,\infty]$,}
\end{equation*}
where $f^{-1}(t) = \sup\{\tau\geq0: f(\tau)\leq t\}$ for $t\geq 0$.

In the definition of the norm using the Young function we can not just use the inverse of the integral as in the definition of the Lebesgue norm. One of the ways, to get the desired properties is to use the Minkowski functional. This gives us the following definition of the Luxemburg norm and the Orlicz space.

\begin{definition}\label{D:luxemburg-norm}
    Let $A$ be a Young function. Then we define the Luxemburg norm $\|\cdot\|_A\colon \mathcal{M} \rightarrow [0,\infty]$ as
    \begin{equation*}
        \|a\|_A = \inf\left\{ \lambda>0: \sum_{n=1}^\infty A \left( \frac{|a_n|}{\lambda}\right) \leq 1\right\}
        \quad\text{for \(a \in\mathcal{M}\).}
    \end{equation*}
    The Orlicz space \((\ell^A,\|\cdot\|_A)\) associated with $A$ is defined as
    \begin{equation*}
        \ell^A = \{a\in\mathcal{M}:\|a\|_A <\infty\}
    \end{equation*}
    with $\|\cdot\|_A$ as its norm.
\end{definition}

From the definition it follows immediately that $a\in \ell^A$ if and only if there exists $\lambda>0$ such that $\sum_{n=1}^\infty A \left( \frac{|a_n|}{\lambda}\right) < \infty$.

Many properties of Orlicz spaces are known for resonant measure spaces. Because $(m, \mathcal{P(\N)})$ is a resonant space (it is fully atomic, containing atoms of the same measure) we can use this theory. One of the results for those Orlicz spaces is that Orlicz spaces are rearrangement-invariant BSS. The proof of this and many different theorems could be found in \cite[Chapter IV.8]{Ben:88}. 

The associated space to an Orlicz space $\ell^A$ is also an Orlicz space, it is the space generated by the complementary function $\widetilde{A}$ to the Young function $A$. The proof could be found in \cite[Corollary IV.8.15]{Ben:88} 

The following observation gives us a general idea of what happens when the Young function is zero on some neighborhood of zero.

\begin{proposition}\label{P:Orlicz-for-t0}
    Let $A$ be a Young function and put $t_0 = \sup\{t\geq 0: A(t) = 0\}$.
    \begin{enumerate}
        \item [\textup{(i)}]We have $\ell^A \hookrightarrow \ell^\infty$. \label{P:Orlicz-infty-embeding}
        \item [\textup{(ii)}]If $t_0>0$ then $\ell^A = \ell^\infty$. \label{P:Orlicz-for-t0>0} 
        \item [\textup{(iii)}]If $t_0=0$ then $\ell^A \subset c_0$. \label{P:Orlicz-for-t0=0} 
    \end{enumerate} 
\end{proposition}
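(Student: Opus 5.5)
For (i) I would simply invoke Proposition~\ref{P:embeding-and-fundamental-sequence}. As recalled above from \cite[Chapter IV.8]{Ben:88}, $\ell^A$ is a rearrangement-invariant BSS, and the first part of that proposition gives $\ell^A\hookrightarrow \ell^\infty$. If one prefers a direct argument, it runs as follows. Suppose $\lambda>\nrm{a}_A$. Then each single term satisfies $A(\abs{a_n}/\lambda)\le 1$. Since $A$ is non-decreasing and tends to infinity, there is $s_1<\infty$ with $A(s)>1$ for $s>s_1$, and hence $\abs{a_n}\le s_1\lambda$. Letting $\lambda\searrow\nrm{a}_A$ gives $\nrm{a}_\infty\le s_1\nrm{a}_A$.

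For (ii) I would use the second part of Proposition~\ref{P:embeding-and-fundamental-sequence}: it suffices to show $\varphi_{\ell^A}\approx 1$. Note that $\varphi_{\ell^A}(n)=\nrm{\chi_{[1,n]}}_A$ is non-decreasing in $n$ and is bounded below by $\varphi_{\ell^A}(1)>0$. For the upper bound, take $\lambda=1/t_0$, or $\lambda=2/t_0$ if $t_0=\infty$ is excluded; in general any $\lambda>1/t_0$ works. Then $1/\lambda<t_0$, so $A(1/\lambda)=0$. Consequently $\sum_{k=1}^n A(\chi_{[1,n]}(k)/\lambda)=0\le 1$ for every $n$, which gives $\varphi_{\ell^A}(n)\le \lambda$ uniformly in $n$. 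Hence $\varphi_{\ell^A}\approx1$ and $\ell^A=\ell^\infty$. One small point needs care: $A$ is left continuous and may equal $0$ at $t_0$ itself, so I would pick $\lambda$ strictly larger than $1/t_0$ to avoid relying on the value $A(t_0)$.

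For (iii), let $a\in\ell^A$, so $\sum_n A(\abs{a_n}/\lambda)<\infty$ for some $\lambda>0$. Fix $\varepsilon>0$. Since $t_0=0$, we have $A(\varepsilon/\lambda)=:\delta>0$. Because the series converges, only finitely many $n$ can satisfy $A(\abs{a_n}/\lambda)\ge\delta$. By monotonicity of $A$, this means $\abs{a_n}\le\varepsilon$ for all but finitely many $n$, and therefore $a_n\to0$.

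None of the steps is a real obstacle. The only delicate points are the boundary behaviour of $A$ at $t_0$ in (ii), handled by choosing $\lambda$ strictly above $1/t_0$, and the possibility that $A$ takes the value $\infty$ in (i). The latter is harmless, since it only makes $s_1$ smaller.
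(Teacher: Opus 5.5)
Your proof is correct and essentially the paper's. Part (i) is the same appeal to Proposition~\ref{P:embeding-and-fundamental-sequence}, and (iii) is the paper's argument ($A(\varepsilon/\lambda)>0$ makes the modular diverge on any infinite set where $|a_n|>\varepsilon$), run directly rather than by contradiction. In (ii) you go through the criterion $\varphi_{\ell^A}\approx 1$, which relies on the Fatou property (P3) of $\ell^A$; the paper applies the same vanishing-of-$A$ computation directly to $a\in\ell^\infty$ and gets $\|a\|_A\le \|a\|_\infty/t_0$. Also, left-continuity forces $A(t_0)=0$, and non-constancy of $A$ forces $t_0<\infty$, so your precautions about taking $\lambda$ strictly above $1/t_0$ and about $t_0=\infty$ are unnecessary, though harmless.
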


%\note[inline]{Luboš: Tady budeme psát poznámky a dotazy.}

\begin{myproof}
    (i) comes immediately from the fact that $\ell^A$ is a \ri BSS and Proposition \ref{P:embeding-and-fundamental-sequence}.
    
    (ii) Thanks to (i), we only need to prove $\ell^\infty \hookrightarrow \ell^A$. Suppose $\|a\|_\infty < \infty$. Then
    \begin{align*}
        \|a\|_A &= \inf\left\{ \lambda>0: \sum_{n=1}^\infty A \left( \frac{|a_n|}{\lambda}\right) \leq 1\right\} 
            \\
        &\leq \inf\left\{ \lambda>0: \sum_{n=1}^\infty A \left( \frac{\|a\|_\infty}{\lambda}\right) \leq 1\right\}
        \leq \frac{1}{t_0} \|a\|_\infty
    \end{align*}
    because $A \left( \frac{\|a\|_\infty t_0}{\|a\|_\infty}\right) = 0$, which follows from the left-continuity of $A$. This establishes the claim.

    (iii) For contradiction, assume that there is $a\in \ell^A$, $\varepsilon>0$ and $E\subset\N$ infinite such that $|a_n|\geq \varepsilon$ for every $n\in E$. For $\lambda>0$ arbitrary we have
    \begin{equation*}
        \sum_{n=1}^\infty A \left( \frac{|a_n|}{\lambda}\right) \geq \sum_{n\in E} A \left( \frac{|a_n|}{\lambda}\right) \geq \sum_{n\in E} A \left( \frac{\varepsilon}{\lambda}\right) = \infty,
    \end{equation*}
    which is a contradiction with $a\in\ell^A$.
\end{myproof}

For the Orlicz space associated with a Young function $A$ the fundamental sequence is given by the following formula:
\begin{equation*}
        \varphi_{\ell^A}(n) = \frac{1}{A^{-1}(\frac{1}{n})},
        \quad n\in\N,
\end{equation*}
where $A^{-1}$ is the right continuous inverse of $A$. The formula comes directly from the definition of the Luxemburg norm by a simple calculation, so the proof is omitted here.

It is not a hard task to characterize embeddings between Orlicz spaces, here we provide two equivalent conditions which we will use later in this paper.

\begin{proposition} \label{T:Charakterizacion-Orlicz-Embedings}
    Let $A,B$ be Young functions. Then the following statements are equivalent:
    \begin{enumerate} [(i)]
        \item [\textup{(i)}]$\ell^A \hookrightarrow \ell^B$,
        \item [\textup{(ii)}]$\varphi_{\ell^B} \ls \varphi_{\ell^A}$,
        \item [\textup{(iii)}]there exist $c>0, t_0>0$ such that $B(t)\leq A(ct)$ for every $t\in[0,t_0)$.
    \end{enumerate}
\end{proposition}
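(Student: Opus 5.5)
I will prove the cycle (i)$\Rightarrow$(ii)$\Rightarrow$(iii)$\Rightarrow$(i). Throughout, I use the formula $\varphi_{\ell^A}(n)=1/A^{-1}(1/n)$, together with Proposition~\ref{P:Orlicz-for-t0} to dispose of degenerate cases.

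\textbf{Step (i)$\Rightarrow$(ii).} This is immediate. Apply $\nrm{\cdot}_B\le C\nrm{\cdot}_A$ to $\chi_{[1,n]\cap\N}$.

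\textbf{Step (ii)$\Rightarrow$(iii).} Condition (ii) reads $A^{-1}(1/n)\le C\,B^{-1}(1/n)$ for all $n$. First I would treat the case where $B$ vanishes near $0$, i.e.\ $t_0^B>0$. Then (iii) holds trivially with $t_0=t_0^B$ and any $c$. Otherwise $B(t)>0$ for all $t>0$. Fix small $t>0$ and choose $n$ with $1/(n+1)<B(t)\le 1/n$; this is possible for $t$ small, since $B$ is continuous where finite and $B(t)\to0$. Then $t\le B^{-1}(1/n)$, up to care with the right-continuous inverse. Hence $A(Ct)\ge A(A^{-1}(1/n))$, which I want to compare with $1/n$. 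The main obstacle is here: passing between $A$ and $A^{-1}$ when $A$ has flat pieces or jumps to $\infty$, and losing the factor $n/(n+1)\ge 1/2$. I would absorb that factor by convexity, using $A(2Ct)\ge 2A(Ct)$, so that $A(2Ct)\ge 1/n\ge B(t)$. If $A^{-1}(1/n)$ lands at a point where $A$ jumps to $\infty$, the inequality only becomes easier. If $A$ vanishes near $0$, then $\varphi_{\ell^A}$ is bounded, so by (ii) $\varphi_{\ell^B}$ is bounded too, which forces $B^{-1}(1/n)$ to be bounded below and hence $B$ to vanish near $0$. That case was already handled.

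\textbf{Step (iii)$\Rightarrow$(i).} Let $a\in\ell^A$ with $\nrm{a}_A\le 1$, so $\sum A(\abs{a_n})\le1$. Since $\ell^A\hookrightarrow\ell^\infty$ by Proposition~\ref{P:Orlicz-for-t0}, we have $\abs{a_n}\le K$ for all $n$, with $K$ an absolute constant. Put $\lambda=\max(c, K/t_0)\cdot 2$, so that $\abs{a_n}/\lambda<t_0$ and $c\abs{a_n}/\lambda\le\abs{a_n}/2$. Then, using (iii) and then convexity,
\[
\sum B(\abs{a_n}/\lambda)\le\sum A(c\abs{a_n}/\lambda)\le \sum A(\abs{a_n}/2)\le \tfrac12\sum A(\abs{a_n})\le 1 .
\]
This gives $\nrm{a}_B\le\lambda$, and homogeneity finishes the argument.
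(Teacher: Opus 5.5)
\textbf{The gap is in (ii)$\Rightarrow$(iii).} Your steps (i)$\Rightarrow$(ii) and (iii)$\Rightarrow$(i) are fine. The latter is exactly the ``simple calculation'' the paper omits, and your choice $\lambda=2\max(c,K/t_0)$ together with $A(s/2)\le A(s)/2$ carries it out correctly.

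In (ii)$\Rightarrow$(iii) you have $t\le B^{-1}(1/n)$ and $A^{-1}(1/n)\le C\,B^{-1}(1/n)$. From these nothing follows about $Ct$ versus $A^{-1}(1/n)$, since both are merely bounded above by $C\,B^{-1}(1/n)$. So the claimed $A(Ct)\ge A(A^{-1}(1/n))$ does not follow.

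What you need is a \emph{lower} bound on $t$, and it comes from the other half of your bracket. If $t\le B^{-1}(1/(n+1))$, then monotonicity and left-continuity of $B$ give $B(t)\le 1/(n+1)$. Hence $B(t)>1/(n+1)$ forces $t>B^{-1}(1/(n+1))$. Applying (ii) at the index $n+1$ gives $Ct>C\,B^{-1}(1/(n+1))\ge A^{-1}(1/(n+1))$. Therefore $A(Ct)>1/(n+1)$, directly from the definition of $A^{-1}$ as a supremum. Convexity then yields
\[
A(2Ct)\ge 2A(Ct)>\tfrac{2}{n+1}\ge\tfrac{1}{n}\ge B(t),
\]
so (iii) holds with $c=2C$ for all small $t$. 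The strict inequality $A(Ct)>1/(n+1)$ also makes your worries about flat pieces or jumps of $A$ unnecessary, as well as the separate case where $A$ vanishes near $0$.

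\textbf{Comparison with the paper.} With this repair your argument is a correct route that differs slightly from the paper's. The paper gets the index shift from quasi-concavity of the fundamental sequence, $\varphi_{\ell^B}(n+1)\le 2\varphi_{\ell^B}(n)$, which yields $A^{-1}(1/n)/(2C)\le B^{-1}(1/(n+1))$. It then covers $(0,B_2]$ by the intervals $(A_n,B_n]$, on each of which $A(2Ct)>1/n\ge B(t)$. You instead work pointwise in $t$ and pay the factor $2$ with convexity of $A$ rather than quasi-concavity of $\varphi_{\ell^B}$. Both arguments end with the same constant $c=2C$.
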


\begin{myproof}
    From (i) it immediately follows (ii).

    Suppose that (ii) holds. Let $C>0$ such that $\varphi_{\ell^B} \leq C \varphi_{\ell^A}$. Then from quasi-concavity of the fundamental sequence we have
    \begin{equation*}
        \varphi_{\ell^B}(n+1) \leq \varphi_{\ell^B}(2n) \leq 2\varphi_{\ell^B}(n) \leq 2C \varphi_{\ell^A}(n)
        \quad \text{for every $n\in\N$.}
    \end{equation*}
    Therefore from the formula for fundamental sequence we get for $A_n, B_n$ defined bellow the following
    \begin{align*}
        A_n &= \sup \left\{\tau\geq 0: A(2C\tau)\leq \frac{1}{n}\right\}
        = \frac{1}{2C}\sup \left\{\tau\geq 0: A(\tau)\leq \frac{1}{n}\right\}\\ 
        &\leq \sup \left\{\tau\geq 0: B(\tau)\leq \frac{1}{n+1}\right\} 
        = B_{n+1}.
    \end{align*}
    If there is $t_0>0$ such that $t_0\leq B_n$ for every $n\in\mathbb N$, then $B=0$ on $[0,t_0)$ so (iii) holds. If not, then by monotonicity of $B_n$ we have $B_n \searrow 0$. From the definitions of $A_n, B_n$ for $n\geq 2$ it follows that
    \begin{equation*}
        A(2Ct)>\frac{1}{n}\geq B(t) 
        \quad \text{for $t\in(A_n, B_n]$.}
    \end{equation*}
    Because $B_{n+1} \geq A_n$, we get
    \begin{equation*}
        \bigcup_{n=2}^\infty (A_n, B_n] = (0,B_2].
    \end{equation*}
    The inequality holds trivially for $t=0$, therefore
    \begin{equation*}
        A(2Ct)\geq B(t) 
        \quad \text{for $t\in [0,B_2]$.}
    \end{equation*}

    (i) follows from (iii) by a simple calculation and the definition of the Luxemburg norm.
\end{myproof}

From this Proposition it is clear that an Orlicz space is uniquely defined by its fundamental function \ie if $\varphi_{\ell^A} \approx \varphi_{\ell^B}$, then $\ell^A = \ell^B$. This property definitely does not hold generally for any (q)BSS.

The following examples of Young functions are very important for us, we will use them in the last section of this paper.

\begin{example} \label{L:Young-functions}%Let $p\in [1,\infty]$ and $\alpha \in \R$.
    \begin{enumerate} 
        \item [\textup{(a)}]Assume that either $1<p<\infty, \alpha \in \R$, or $p=1, \alpha\leq0$, then there exists a Young function $A$ satisfying \label{L:Young-p-obecne}
        \begin{equation*}
            A(t) = t^p (1-\log(t))^{\alpha p}
            \quad \text{on some neighborhood of $0$}
        \end{equation*}
        and $\|a\|_A <\infty$ iff $\sum_{n=1}^\infty \abs{a_n} < \infty$ for every $a\in \mathcal{M}$ and the complementary function $\widetilde{A}$ satisfies
        \begin{equation*}
            \widetilde{A}(t) \sim
            \begin{cases}
                t^{p'}(1-\log(t))^{-\alpha p'} &\text{if $p>1$,}
                \\
                e^{-t^\frac{1}{\alpha}} &\text{{if $p=1$.}}
            \end{cases}
        \end{equation*}
         
        \item [\textup{(b)}]Let $\alpha>0$, then there exists a Young function $A$ satisfying: \label{L:Young-p=q=infty}
        \begin{equation*}
            A(t) = \exp(-t^{-\frac{1}{\alpha}})
            \quad \text{on some neighborhood of $0$.}
        \end{equation*}
    \end{enumerate}
\end{example}

The properties of the Young functions are easy to verify by a simple calculation, so the exact proof is omitted.

\section{Lorentz--Zygmund spaces}
The Lorentz--Zygmund spaces are being governed by a weight consisting of power function from the usual Lorentz scale depending on two parameters $p,q$ and a power of a logarithm depending on parameter $\alpha$.

The following lemma talks about the asymptotic behavior of the partial sums of the weight. We will use it quite extensively in future proofs. The lemma will allow us to easily check requirements for weighted Hardy inequalities or immediately gives us the fundamental sequence of a Lorentz--Zygmund space.

The desired weighted Hardy inequalities we will use in this paper are from~\cite[Theorem 1]{Ben:91} see also~\cite[Chap. 6]{Hardy:07}.

\begin{lemma} \label{L:suma-mocnina-log}
    Let $\lambda, \tau \in \R$.
    \begin{enumerate} [(a)] 
        \item [\textup{(i)}]If $\lambda>-1$ then
        \begin{equation*}
            \sum_{k=1}^n k^\lambda (1+\log(k))^\tau \approx n^{\lambda+1} (1+\log(n))^\tau.
        \end{equation*}
        \item [\textup{(ii)}]If $\lambda=-1$ and $\tau>-1$ then
        \begin{equation*}
            \sum_{k=1}^n k^\lambda (1+\log(k))^\tau \approx (1+\log(n))^{\tau+1}.
        \end{equation*}
        \item [\textup{(iii)}]If $\lambda=-1$ and $\tau=-1$ then
        \begin{equation*}
            \sum_{k=1}^n k^\lambda (1+\log(k))^\tau \approx 1+\log(1+\log(n)).
        \end{equation*}
        \item [\textup{(iv)}]If $\lambda<-1$ then
        \begin{equation*}
            \sum_{k=n}^\infty k^\lambda (1+\log(k))^\tau \approx n^{\lambda+1} (1+\log(n))^\tau.
        \end{equation*}
        \item [\textup{(v)}]If $\lambda=-1$ and $\tau<-1$ then 
        \begin{equation*}
            \sum_{k=n}^\infty k^\lambda (1+\log(k))^\tau \approx (1+\log(n))^{\tau+1}.
        \end{equation*}
    \end{enumerate}
\end{lemma}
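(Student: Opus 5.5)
The natural approach is to compare each sum with the corresponding integral of $g(x)=x^\lambda(1+\log x)^\tau$ on $[1,\infty)$, and then evaluate that integral asymptotically. The function $g$ is positive and continuous. It is also eventually monotone: the derivative is
\begin{equation*}
g'(x)=x^{\lambda-1}(1+\log x)^{\tau-1}\bigl(\lambda(1+\log x)+\tau\bigr),
\end{equation*}
whose sign is eventually that of $\lambda$ (or of $\tau$ when $\lambda=0$, and of $\tau$ when $\lambda=-1$ as well). More importantly, $g$ is \emph{doubling-comparable}: for $k\le x\le k+1$ with $k\ge1$ we have $g(x)\approx g(k)$, uniformly in $k$. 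This holds because $x/k\in[1,2]$ and $(1+\log x)/(1+\log k)\in[1,1+\log 2]$. Hence
\begin{equation*}
g(k)\approx\int_k^{k+1}g(x)\,dx ,
\end{equation*}
and summing gives $\sum_{k=1}^n g(k)\approx\int_1^{n+1}g\approx\int_1^n g+g(1)$ for the partial sums. Likewise $\sum_{k=n}^\infty g(k)\approx\int_n^\infty g$ for the tails. No monotonicity is actually needed, so this single step handles all five cases at once.

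It then remains to evaluate the integrals, using the substitution $x=e^{s}$, i.e.\ $dx=e^s\,ds$. This gives
\begin{equation*}
\int_1^n g=\int_0^{\log n}e^{(\lambda+1)s}(1+s)^\tau\,ds .
\end{equation*}
The cases go as follows.
\begin{itemize}
\item[(ii), (iii)] Here $\lambda=-1$, so the integral is $\int_0^{\log n}(1+s)^\tau\,ds$. For $\tau>-1$ this equals $\frac{(1+\log n)^{\tau+1}-1}{\tau+1}\approx(1+\log n)^{\tau+1}$. For $\tau=-1$ it equals $\log(1+\log n)$, and adding $g(1)=1$ gives $1+\log(1+\log n)$.
\item[(v)] For $\lambda=-1$, $\tau<-1$, the tail integral is $\frac{(1+\log n)^{\tau+1}}{-\tau-1}$ exactly.
\item[(i)] For $\lambda>-1$, I would show $\int_1^n g\approx n^{\lambda+1}(1+\log n)^\tau$. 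The upper estimate for $\tau\ge0$ is immediate by bounding the logarithm by its value at $n$. In general, split $\int_1^n=\int_1^{\sqrt n}+\int_{\sqrt n}^n$. On $[\sqrt n,n]$ we have $1+\log x\approx 1+\log n$, so this part is $\approx(1+\log n)^\tau(n^{\lambda+1}-n^{(\lambda+1)/2})$. The first part is at most $C n^{(\lambda+1)/2}(1+\log n)^{|\tau|}$, which is $o\bigl(n^{\lambda+1}(1+\log n)^\tau\bigr)$. For small $n$, both sides are bounded above and below by positive constants, so the equivalence extends to all $n\ge1$.
\item[(iv)] For $\lambda<-1$ the tail is handled the same way, splitting at $n^2$. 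On $[n,n^2]$ the logarithm is comparable to $1+\log n$, giving $\approx n^{\lambda+1}(1+\log n)^\tau$. The piece $\int_{n^2}^\infty$ is $O\bigl(n^{2(\lambda+1)}(1+\log n)^{|\tau|}\bigr)$, which is negligible.
\end{itemize}

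The main obstacle is bookkeeping rather than ideas. One must check that the $\approx$ constants are uniform in $n$, which requires the splitting argument to absorb the error terms for large $n$ and a compactness (finitely many $n$) argument for small $n$. One must also track the additive constant $g(1)$ in the cases where $\int_1^n g$ vanishes at $n=1$, which matters in (ii) and (iii).
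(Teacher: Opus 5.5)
Your argument is correct, but it is organized differently from the paper's.

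\textbf{The paper's route.} For (i) and (iv) the paper uses an $\varepsilon$-perturbation. It writes $k^\lambda(1+\log k)^\tau=k^{\lambda\pm\varepsilon}\cdot k^{\mp\varepsilon}(1+\log k)^\tau$ with $0<\varepsilon<|1+\lambda|$. It then uses the eventual monotonicity of $k^{\mp\varepsilon}(1+\log k)^\tau$ to pull that factor out of the sum at its value at $k=n$. What remains are pure power sums $\sum k^{\lambda\pm\varepsilon}$, which are compared with $\int t^{\lambda\pm\varepsilon}\,dt$. This trick needs room around $\lambda$, so the borderline cases (ii), (iii), (v) are handled separately, by a monotone sum--integral comparison and the substitution $x=\log t$.

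\textbf{Your route.} You pass from sums to integrals once and for all, through the uniform comparability $g(x)\approx g(k)$ on $[k,k+1]$. This needs no monotonicity and covers all five cases at once. You then evaluate $\int g$ exactly when $\lambda=-1$. When $\lambda\neq-1$ you split at $\sqrt n$ (resp.\ $n^2$), beyond which $1+\log x\approx 1+\log n$.

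\textbf{Comparison.} Your route is more uniform. It would work verbatim with any slowly varying factor in place of $(1+\log k)^\tau$, and it avoids the eventual-monotonicity bookkeeping on which the paper's estimates rely. The paper's trick, in exchange, needs no splitting and no large-$n$ asymptotics, since it reduces everything to elementary power sums.

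\textbf{Two minor points.} First, for $\tau>0$ the bound $\int_{n^2}^\infty g=O\bigl(n^{2(\lambda+1)}(1+\log n)^{\tau}\bigr)$ deserves a line. Substitute $x=n^2y$ and use $1+a+b\le(1+a)(1+b)$ for $a,b\ge0$; this reduces it to the convergent integral $\int_1^\infty y^\lambda(1+\log y)^\tau\,dy$. Second, your parenthetical claim that for $\lambda=-1$ the sign of $g'$ is eventually that of $\tau$ is wrong. In that case $\lambda(1+\log x)+\tau=\tau-1-\log x\to-\infty$, so $g$ is eventually decreasing. Neither point affects the proof, since you never use monotonicity.
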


\begin{myproof}
    (i) Find $\varepsilon$ such $0<\varepsilon<1+\lambda$ and $n_0\in \N$ such that:
    \begin{itemize}
        \item $k^\varepsilon(1+\log(k))^\tau \geq 1$,
        \item $k^{-\varepsilon}(1+\log(k))^\tau \leq 1$,
        \item $(n+1)^{\lambda -\varepsilon+1}-1 \leq 2 n^{\lambda -\varepsilon+1}$
        \item $(n+1)^{\lambda +\varepsilon+1}-1 \geq \frac{1}{2} n^{\lambda + \varepsilon+1}$
    \end{itemize}
    for every $k\geq n_0$. Then for $n\geq n_0$ we have:
    \begin{equation} \label{E:lambda>-1 >}
        \sum_{k=1}^n k^\lambda (1+\log(k))^{\tau} 
        = \sum_{k=1}^n k^{\lambda +\varepsilon}k^{-\varepsilon}(1+\log(k))^{\tau}
        \geq n^{-\varepsilon}(1+\log(n))^{\tau} \sum_{k=1}^n k^{\lambda +\varepsilon}
    \end{equation}
    and
    \begin{equation} \label{E:lambda>-1 <}
        \sum_{k=1}^n k^\lambda (1+\log(k))^{\tau} 
        = \sum_{k=1}^n k^{\lambda -\varepsilon}k^{\varepsilon}(1+\log(k))^{\tau}
        \leq n^{\varepsilon}(1+\log(n))^{\tau} \sum_{k=1}^n k^{\lambda -\varepsilon}.
    \end{equation}

    If $\lambda+\epsilon<0$ then:
    \begin{equation} \label{E:odhad-int-1}
        \sum_{k=1}^n k^{\lambda +\varepsilon}
        \geq \int_0^n (t+1)^{\lambda +\varepsilon} dt
        = \frac{(n+1)^{\lambda +\varepsilon +1}-1}{\lambda +\varepsilon +1}
        \gs n^{\lambda +\varepsilon +1}.
    \end{equation}
    
    If $\lambda+\epsilon \geq 0$
    \begin{equation} \label{E:odhad-int-2}
        \sum_{k=1}^n k^{\lambda +\varepsilon}
        \geq \int_0^n t^{\lambda +\varepsilon} dt
        \approx  n^{\lambda +\varepsilon +1}.
    \end{equation}

    If $\lambda-\varepsilon<0$:
    \begin{equation} \label{E:odhad-int-3}
        \sum_{k=1}^n k^{\lambda -\varepsilon}
        \leq \int_0^n t^{\lambda -\varepsilon} dt
        \approx  n^{\lambda -\varepsilon +1}.
    \end{equation}
    
    If $\lambda-\varepsilon\geq 0$:
    \begin{equation} \label{E:odhad-int-4}
        \sum_{k=1}^n k^{\lambda -\varepsilon}
        \leq \int_0^n (t+1)^{\lambda -\varepsilon} dt
        = \frac{(n+1)^{\lambda -\varepsilon +1}-1}{\lambda -\varepsilon +1}
        \ls  n^{\lambda +\varepsilon +1}.
    \end{equation}
    
    Note that the transitions between the sums and the integrals are only possible thanks to the monotonicity of the summed sequence. Combining \eqref{E:lambda>-1 >} with either \eqref{E:odhad-int-1} or \eqref{E:odhad-int-2} and \eqref{E:lambda>-1 <} with either \eqref{E:odhad-int-3} or \eqref{E:odhad-int-4} gives us the desired result.

    (ii) Find $n_0 \in \N$ such that $k \mapsto k^{-1} (1+\log(k))^{\tau}$ is non-increasing for $k\geq n_0$. Put $C = \sum_{k=1}^{n_0} k^{-1}(1+\log(k))^{\tau}$, then using the substitution $x=\log(t+1)$ and the fact that $\tau>-1$ we have for $n\geq n_0$ the following:
    \begin{align*}
        \sum_{k=1}^n k^{-1}(1+\log(k))^{\tau} 
        &= C + \sum_{k=n_0+1}^n k^{-1}(1+\log(k))^{\tau} \\
        &\geq C + \int_{n_0}^n \frac{(1+\log(t+1))^{\tau}}{t+1} dt 
        = C + \int_{\log(n_0+1)}^{\log(n+1)}(1+x)^{\tau} dx \\
        &= C + \frac{(1+\log(n+1))^{\tau +1}-(1+\log(n_0+1))^{\tau +1}}{\tau +1} \\ 
        &\approx (1+\log(n+1))^{\tau +1}.
    \end{align*}
    Also by using the substitution $x=\log(t)$ we have for $n\geq n_0$ the following:
    \begin{align*}
        \sum_{k=1}^n k^{-1}(1+\log(k))^{\tau} 
        &= C + \sum_{k=n_0+1}^n k^{-1}(1+\log(k))^{\tau}
        \leq C + \int_{n_0}^n \frac{(1+\log(t))^{\tau}}{t} dt\\
        &= C + \int_{\log(n_0)}^{\log(n)}(1+x)^{\tau} dx \\
        &= C + \frac{(1+\log(n))^{\tau +1}-(1+\log(n_0))^{\tau +1}}{\tau +1}\\
        &\approx (1+\log(n+1))^{\tau +1}.
    \end{align*}

    (iii) We have
    \begin{align*}
        \sum_{k=1}^n k^{-1}(1+\log(k))^{-1} 
        &\geq \int_{0}^n \frac{1}{(1+\log(t+1))(t+1)} dt \\
        &= \int_{0}^{\log(n+1)}\frac{1}{1+x} dx = \log(1+\log(n+1)) \approx 1+\log(1+\log(n)).
    \end{align*}
    And, conversely,
    \begin{equation*}
        \sum_{k=1}^n k^{-1}(1+\log(k))^{-1} 
        \leq 1+\int_{1}^n \frac{1}{(1+\log(t))t} dt
        = 1+ \int_{0}^{\log(n)}\frac{1}{1+x} dx = 1+\log(1+\log(n)).
    \end{equation*}    

    (iv) Let $0<\varepsilon<-\lambda-1$. Find $n_0$ such that
    \begin{itemize}
        \item $n_0\geq 2$,
        \item $k^\varepsilon(1+\log(k))^\tau \geq 1$,
        \item $k^{-\varepsilon}(1+\log(k))^\tau \leq 1$,
    \end{itemize}
     for every $k\geq n_0$. Then for $n\geq n_0$ we have
     \begin{equation} \label{E:lambda<-1 >}
        \sum_{k=n}^\infty k^\lambda (1+\log(k))^{\tau} 
        = \sum_{k=n}^\infty k^{\lambda -\varepsilon}k^{\varepsilon}(1+\log(k))^{\tau}
        \geq n^{\varepsilon}(1+\log(n))^{\tau} \sum_{k=n}^\infty k^{\lambda -\varepsilon},
    \end{equation}
    \begin{equation} \label{E:lambda<-1 <}
        \sum_{k=n}^\infty k^\lambda (1+\log(k))^{\tau} 
        = \sum_{k=n}^\infty k^{\lambda +\varepsilon}k^{-\varepsilon}(1+\log(k))^{\tau}
        \leq n^{-\varepsilon}(1+\log(n))^{\tau} \sum_{k=n}^\infty k^{\lambda +\varepsilon}.
    \end{equation}
    And similarly as before
    \begin{equation} \label{E:odhad-int-5}
        \sum_{k=n}^\infty k^{\lambda -\varepsilon} 
        \geq \int_{n-1}^\infty (t+1)^{\lambda -\varepsilon} dt
        = 0 - \frac{n^{\lambda-\varepsilon+1}}{\lambda - \varepsilon +1} 
        \approx n^{\lambda-\varepsilon+1},
    \end{equation}
    \begin{equation} \label{E:odhad-int-6}
        \sum_{k=n}^\infty k^{\lambda +\varepsilon} 
        \leq \int_{n-1}^\infty t^{\lambda +\varepsilon} dt
        = 0 - \frac{(n-1)^{\lambda+\varepsilon+1}}{\lambda + \varepsilon +1} 
        \approx n^{\lambda+\varepsilon+1}.
    \end{equation}
    Therefore by combining \eqref{E:lambda<-1 >} and \eqref{E:lambda<-1 <} with \eqref{E:odhad-int-5} and \eqref{E:odhad-int-6} respectively, we get the desired result.

    (v) The assertion could be proven similarly as in (ii) and (iv).
\end{myproof}

Now we define the Lorentz--Zygmund functionals for the non-increasing rearrangement and the maximal sequence. Then we define the corresponding spaces.

\begin{definition}\label{D:lorentz-zygmund-space}
    Let $p\in(0,\infty], q\in (0,\infty], \alpha\in\mathbb{R}$ and $ a\in\mathcal{M}$, we define the functional $\|\cdot\|_{p,q;\alpha}\colon \mathcal{M}\rightarrow[0,\infty]$ as
    \begin{equation*}
        \|a\|_{p,q;\alpha} = \|a_n^* n^{\frac{1}{p}-\frac{1}{q}}(1+\log(n))^\alpha\|_q
    \end{equation*}
    and the Lorentz--Zygmund space $(\ell^{p,q;\alpha},\|\cdot\|_{p,q;\alpha})$ as
    \begin{equation*}
        \ell^{p,q;\alpha} = \{ a\in\mathcal{M}: \|a\|_{p,q;\alpha}<\infty\}.
    \end{equation*}
    We define the modified Lorentz--Zygmund functional $\|\cdot\|_{(p,q;\alpha)}\colon \mathcal{M}\rightarrow[0,\infty]$ as:
    \begin{equation*}
        \|a\|_{(p,q;\alpha)} = \| a_n^{**} n^{\frac{1}{p}-\frac{1}{q}}(1+\log(n))^\alpha \|_q
    \end{equation*}
    and the modified Lorentz--Zygmund space $(\ell^{(p,q;\alpha)},\| \cdot \|_{(p,q;\alpha)})$ as
    \begin{equation*}
        \ell^{(p,q;\alpha)}=\{a\in\mathcal{M}: \|a\|_{(p,q;\alpha)}<\infty\}.
    \end{equation*}
\end{definition}

In the following part of the section we will take a closer look on when the Lorentz--Zygmund space is a BSS. Because Lorentz--Zygmund spaces are a type of weight spaces, the biggest problem is the triangle inequality, because the operator of non-increasing rearrangement is not sub-additive.

For this reason, we have introduced the modified space $\ell^{(p,q;\alpha)}$ using the maximal sequence, which is as we know sub-additive and therefore for $q\geq 1$ the functional $\|\cdot\|_{(p,q;\alpha)}$ is a norm (for more detail see Proposition \ref{P:L-Z-BSS}). So the underlying question is whether the two functionals are equivalent. The assertion $\nrm{\cdot}_{p,q;\alpha} \ls \nrm{\cdot}_{(p,q;\alpha)}$ is obvious from definition of the maximal sequence, the converse is shown in the following proposition.

\begin{proposition}
    Let $1<p\leq\infty, 1\leq q \leq \infty, \alpha\in\R$, then $\|\cdot\|_{(p,q;\alpha)} \ls \|\cdot\|_{p,q;\alpha}$.
\end{proposition}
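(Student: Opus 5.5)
The inequality is a discrete weighted Hardy inequality for the averaging operator applied to the non-increasing sequence $a^*$. Writing $w_n = n^{\frac1p-\frac1q}(1+\log(n))^\alpha$, we must show
\begin{equation*}
    \Bigl\| w_n \frac1n\sum_{k=1}^n a_k^* \Bigr\|_q \ls \| w_n a_n^*\|_q .
\end{equation*}
Since $q\ge 1$, I would apply the weighted Hardy inequality for sequences from \cite[Theorem 1]{Ben:91}, in its $q\to q$ form (with the usual modification when $q=\infty$ or $q=1$), with weights $u_n = w_n/n$ on the output and $v_n = w_n$ on the input. The monotonicity of $a^*$ is not needed: the inequality will hold for all non-negative sequences, hence in particular for $a^*$.

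For $1<q<\infty$ the Muckenhoupt-type condition reads
\begin{equation*}
    \sup_{N}\Bigl(\sum_{n=N}^\infty u_n^q\Bigr)^{\frac1q}\Bigl(\sum_{k=1}^N v_k^{-q'}\Bigr)^{\frac1{q'}}<\infty .
\end{equation*}
Here $u_n^q = n^{\frac qp - q-1}(1+\log(n))^{\alpha q}$. Since $p>1$, we have $\frac qp-q-1<-1$, so Lemma~\ref{L:suma-mocnina-log}(iv) gives
\begin{equation*}
    \sum_{n\ge N}u_n^q\approx N^{\frac qp-q}(1+\log(N))^{\alpha q}.
\end{equation*}
Next, $v_k^{-q'} = k^{-\frac{q'}p+\frac{q'}q}(1+\log(k))^{-\alpha q'}$, and the exponent $q'(\frac1q-\frac1p)=q'(\frac1{p'}-\frac1{q'})>-1$ exactly because $p>1$. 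Lemma~\ref{L:suma-mocnina-log}(i) then gives
\begin{equation*}
    \sum_{k\le N}v_k^{-q'}\approx N^{q'(1-\frac1p)}(1+\log(N))^{-\alpha q'}.
\end{equation*}
In the product, the powers of $N$ give $N^{\frac1p-1}N^{1-\frac1p}=1$ and the logarithms cancel, so the supremum is finite.

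The endpoint cases are handled separately.
\begin{itemize}
    \item For $q=1$, Fubini gives $\sum_n \frac{w_n}{n}\sum_{k\le n}a_k^*=\sum_k a_k^*\sum_{n\ge k}\frac{w_n}{n}$. Lemma~\ref{L:suma-mocnina-log}(iv) with $\lambda=\frac1p-2<-1$ yields $\sum_{n\ge k}w_n/n\approx w_k$.
    \item For $q=\infty$, we have $a_k^*\le \|a\|_{p,\infty;\alpha}\,k^{-\frac1p}(1+\log(k))^{-\alpha}$. Summing with Lemma~\ref{L:suma-mocnina-log}(i), where $\lambda=-\frac1p>-1$ (including $p=\infty$, $\lambda=0$), gives $\sum_{k\le n}a_k^*\ls n^{1-\frac1p}(1+\log(n))^{-\alpha}\|a\|_{p,\infty;\alpha}$. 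Multiplying by $w_n/n$ gives the bound.
\end{itemize}

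The main obstacle is matching the exact form of the discrete Hardy criterion in \cite{Ben:91}: the indexing (whether tails start at $N$ or $N+1$) and the endpoint formulations. Shifting an index only changes a factor $(1+\log)$ or a power by a bounded amount, so Lemma~\ref{L:suma-mocnina-log} absorbs this. The one real condition is $p>1$, which is used exactly in the applications of Lemma~\ref{L:suma-mocnina-log}(i) and (iv).
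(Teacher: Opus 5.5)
Your proposal is correct and follows essentially the same route as the paper. For $1<q<\infty$ you verify the discrete Muckenhoupt condition for the weighted Hardy inequality of \cite[Theorem 1]{Ben:91} using Lemma~\ref{L:suma-mocnina-log}(i) and (iv), and for $q=\infty$ you use the direct pointwise bound on $a^*_k$ together with Lemma~\ref{L:suma-mocnina-log}(i), exactly as the paper does. The only cosmetic difference is at $q=1$: you carry out the Fubini computation explicitly, which even yields equivalence, whereas the paper cites the Hardy criterion. The underlying estimate $\sum_{n\ge k} n^{\frac1p-2}(1+\log(n))^\alpha \approx k^{\frac1p-1}(1+\log(k))^\alpha$ is identical in both.
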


\begin{myproof}
    Suppose $1<q<\infty$ and $a\in\mathcal{M}$. From assumptions we have $\frac{q}{p}-q-1<-1$ and $(\frac{q}{p}-1)(1-q')>-1$ and by using Lemma \ref{L:suma-mocnina-log} (i) and (iv) we have
    \begin{align*}
        &\sup_{n\in\N}\left(\sum_{k=n}^\infty k^{\frac{q}{p}-q-1}(1+\log(k))^{q\alpha}\right)^\frac{1}{q} \left(\sum_{k=1}^n k^{(\frac{q}{p}-1)(1-q')}(1+\log(k))^{q\alpha(1-q')}\right)^\frac{1}{q'}\\
        &\leq K_1 \sup_{n\in\N}\left(n^{\frac{q}{p}-q}(1+\log(n))^{q\alpha}\right)^\frac{1}{q} \left(n^{(\frac{q}{p}-1)(1-q')+1} (1+\log(n))^{q\alpha(1-q')}\right)^\frac{1}{q'}\\
        &= K_1 \sup_{n\in\N} n^{\frac{1}{p}-1} (1+\log(n))^\alpha n^{-(\frac{q}{p}-1)\frac{1}{q}+\frac{1}{q'}} (1+\log(n))^{-\alpha}
        = K_1 <\infty.
    \end{align*}

    Now suppose $q=1$. Again we have $\frac{1}{p}-2<-1$ and thus by using Lemma \ref{L:suma-mocnina-log} (iv) we get
    \begin{align*}
        &\sup_{n\in\N}\sum_{k=n}^\infty k^{\frac{1}{p}-2} (1+\log(k))^\alpha \left(n^{\frac{1}{p}-1}(1+\log(n))^\alpha \right)^{-1} \\
        &\leq K_2 \sup_{n\in\N} n^{\frac{1}{p}-1} (1+\log(n))^\alpha n^{1-\frac{1}{p}} (1+\log(n))^{-\alpha}
        = K_2 <\infty.
    \end{align*}

    Thus for $1\leq q <\infty$ we have verified the assumptions of the weighted Hardy inequality, so we get
    \begin{equation*}
        \left(\sum_{n=1}^\infty \left(\frac{1}{n} \sum_{k=1}^n a_k^* \right)^q n^{\frac{q}{p}-1}(1+\log(n))^{q\alpha} \right)^\frac{1}{q} 
        \ls \left(\sum_{n=1}^\infty (a_n^*)^q n^{\frac{q}{p}-1}(1+\log(n))^{q\alpha}\right)^\frac{1}{q}.
    \end{equation*}

    Suppose $q=\infty$ and $a\in\mathcal{M}$ such that $\sup_{n\in\N} a_n^* n^\frac{1}{p} (1+\log(n))^\alpha = M <\infty$, otherwise the inequality is trivial. Therefore for every $n\in\N$ we have:
    \begin{equation*}
        a_n^* \leq \frac{M}{n^\frac{1}{p} (1+\log(n))^\alpha}.
    \end{equation*}
    Thus by the fact that $-\frac{1}{p}>-1$ and by using Lemma \ref{L:suma-mocnina-log} (i) we get
    \begin{align*}
        \| a_n^{**} n^\frac{1}{n} (1+\log(n))^\alpha \|_\infty
        &= \sup_{n\in\N} \frac{1}{n} \sum_{k=1}^n a_k^* n^\frac{1}{p} (1+\log(n))^\alpha \\
        &\leq M \sup_{n\in\N} \sum_{k=1}^n k^{-\frac{1}{p}} (1+\log(k))^{-\alpha} n^{\frac{1}{p}-1} (1+\log(n))^\alpha \\
        &\ls M \sup_{n\in\N} n^{-\frac{1}{p}+1} (1+\log(n))^{-\alpha} n^{\frac{1}{p}-1} (1+\log(n))^\alpha \\
        &= M,
    \end{align*}
    which concludes the proof.
\end{myproof}

\begin{proposition} \label{P:properities-L-Z}
    For parameters $p\in(0,\infty], q\in (0,\infty], \alpha\in\mathbb{R}$, the Lorentz--Zygmund space $(\ell^{p,q;\alpha},\|\cdot\|_{p,q;\alpha})$ is a rearrangement--invariant quasi-Banach sequence space.
\end{proposition}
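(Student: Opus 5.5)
Write $w_n = n^{\frac1p-\frac1q}(1+\log n)^\alpha$, so that $\|a\|_{p,q;\alpha} = \|a^* w\|_q$ with $w_n>0$. The plan is to check (Q1), (P2)--(P5) and rearrangement invariance one at a time. All of them except the quasi-triangle inequality follow directly from the properties of $a^*$ in Proposition~\ref{P:Properties-of-rearrangement}.

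\emph{Rearrangement invariance and (Q1) except the triangle part.} The rearrangement $a^*$ is built from $a_*$ alone, so $a_*=b_*$ gives $a^*=b^*$ and hence equal functionals. The same remark gives $|a|^*=a^*$, since $|a|$ and $a$ have the same distribution function, so $\|a\|=\||a|\|$. Homogeneity follows from $(ca)_*(\lambda)=a_*(\lambda/|c|)$ for $c\neq 0$, which yields $(ca)^*=|c|a^*$. For definiteness: if $\|a\|=0$, then $a_1^*w_1=0$, and since $w_1=1$ we get $\|a\|_\infty=a_1^*=0$ (the identity $\|a\|_\infty=a_1^*$ is Proposition~\ref{P:Properties-of-rearrangement}~(ii) with $n=1$).

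\emph{(P2)--(P5).} (P2) follows from Proposition~\ref{P:Properties-of-rearrangement}~(iii) together with monotonicity of $\|\cdot\|_q$ on nonnegative sequences. For (P3), if $0\le a^m\nearrow a$, then Proposition~\ref{P:Properties-of-rearrangement}~(iv) gives $(a^m)^*w\nearrow a^*w$ pointwise, and the monotone convergence theorem for sums (or the supremum, when $q=\infty$) gives $\|a^m\|\nearrow\|a\|$. For (P4), $\chi_E^*=\chi_{[1,m(E)]}$ by (i), so $\|\chi_E\|$ is a finite sum or a finite maximum. For (P5) I would reuse the calculation displayed after Definition~\ref{D:RI}: $\sum_{n\in E}|a_n|\le m(E)a_1^* = m(E)\|a_1^*\chi_{\{1\}}\|\le m(E)\|a\|$. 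This uses only (P2), homogeneity and rearrangement invariance, all of which are established above, together with $\|\chi_{\{1\}}\|=w_1=1$.

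\emph{Quasi-triangle inequality (main obstacle).} Here I use the estimate stated in Section~2, $(a+b)^*_n\le a^*_{k(n)}+b^*_{k(n)}$ with $k(n)=\lfloor\frac{n+1}{2}\rfloor$. This gives
\begin{equation*}
\|a+b\|_{p,q;\alpha}\le \bigl\|a^*_{k(n)}w_n\bigr\|_q+\bigl\|b^*_{k(n)}w_n\bigr\|_q\cdot C_q ,
\end{equation*}
where $C_q=\max(1,2^{1/q-1})$ is the quasi-triangle constant of $\ell^q$ (and $C_q=1$ when $q\ge1$ or $q=\infty$). It remains to compare $\|a^*_{k(n)}w_n\|_q$ with $\|a^*w\|_q$. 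Each index $k$ equals $k(n)$ for at most two values of $n$, namely $n\in\{2k-1,2k\}$. Moreover $w_{2k-1}$ and $w_{2k}$ are bounded by a constant times $w_k$: the power factor changes by at most $2^{|1/p-1/q|}$, and $(1+\log 2k)/(1+\log k)\in[1,1+\log 2]$, so the logarithmic factor changes by at most $(1+\log2)^{|\alpha|}$. Consequently $\sum_n (a^*_{k(n)}w_n)^q\le 2C^q\sum_k (a_k^*w_k)^q$, and for $q=\infty$ the supremum is controlled in the same way. This gives $\|a+b\|\le C(\|a\|+\|b\|)$. The only care needed is in uniformly bounding the ratio $w_{2k}/w_k$ over all sign combinations of $\frac1p-\frac1q$ and $\alpha$, which the estimate above handles.
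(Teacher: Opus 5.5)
Your proof is correct and follows essentially the same route as the paper: the quasi-triangle inequality comes from $(a+b)^*_n\le a^*_{\lfloor\frac{n+1}{2}\rfloor}+b^*_{\lfloor\frac{n+1}{2}\rfloor}$, then splitting $n$ into $2k-1,2k$ and comparing those weights with the weight at $k$, and the remaining axioms are read off from Proposition~\ref{P:Properties-of-rearrangement} and monotone convergence. One cosmetic fix: in your display, the constant $C_q$ should multiply the sum of both terms, not only the second one.
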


\begin{myproof}
    Clearly, the space is rearrangement--invariant and $\|a\|_{p,q;\alpha} = 0 \iff a\equiv 0$. For $c\in \mathbb{R}$ from Proposition~\ref{P:Properties-of-rearrangement} (ii) we get $\|ca\|_{p,q;\alpha} = \abs{c}\|a\|_{p,q;\alpha}$.
    The quasi-triangular inequality is proved in the following way. From convexity, respectively concavity of a function $t \mapsto t^\lambda$ for $t\geq 0$, $\lambda>0$ we have $(t+s)^\lambda \ls t^\lambda + s^\lambda$. Let $a,b\in \measurable$, then for $q<\infty$:
    \begin{align*}
        \nrm{a+b}_{p,q;\alpha} &= \left(\sum_{n=1}^\infty ((a+b)_n^*)^q n^{\frac{q}{p}-1}(1+\log(n))^{q\alpha}\right)^\frac{1}{q} \\
        &\leq \left(\sum_{n=1}^\infty (a^*_{\floor{\frac{n+1}{2}}} + b^*_{\floor{\frac{n+1}{2}}})^q n^{\frac{q}{p}-1}(1+\log(n))^{q\alpha}\right)^\frac{1}{q} \\
        &\ls \left(\sum_{n=1}^\infty (a^*_{\floor{\frac{n+1}{2}}})^q n^{\frac{q}{p}-1}(1+\log(n))^{q\alpha}\right)^\frac{1}{q} + \left(\sum_{n=1}^\infty (b^*_{\floor{\frac{n+1}{2}}})^q n^{\frac{q}{p}-1}(1+\log(n))^{q\alpha}\right)^\frac{1}{q}\\
        &= S_1 + S_2.
    \end{align*}
    If we sum over odd and even numbers separately:
    \begin{align*}
        S_1&= \left(\sum_{k=1}^\infty (a^*_k)^q (2k-1)^{\frac{q}{p}-1}(1+\log(2k-1))^{q\alpha} + \sum_{k=1}^\infty (a^*_k)^q (2k)^{\frac{q}{p}-1}(1+\log(2k))^{q\alpha}\right)^\frac{1}{q}\\
        &\approx \left(\sum_{k=1}^\infty (a^*_k)^q (k)^{\frac{q}{p}-1}(1+\log(k))^{q\alpha} + \sum_{k=1}^\infty (a^*_k)^q (k)^{\frac{q}{p}-1}(1+\log(k))^{q\alpha}\right)
        \approx \nrm{a}_{p,q;\alpha}.
    \end{align*}
    The sum $S_2$ is calculated similarly. Therefore we obtain $\nrm{a+b}_{p,q;\alpha} \ls \nrm{a}_{p,q;\alpha} + \nrm{b}_{p,q;\alpha}$. Now consider the case $q=\infty$:
    \begin{align*}
        \nrm{a+b}_{p,\infty;\alpha} &= \sup_{n\in\N} (a + b)_n^* n^{\frac{1}{p}} (1+\log(n))^\alpha\\
        &= \sup (\{(a + b)_{2k-1}^* (2k-1)^{\frac{1}{p}} (1+\log(2k-1))^\alpha; k\in\N \} \cup\\
        &\{(a + b)_{2k}^* (2k)^{\frac{1}{p}} (1+\log(2k))^\alpha; k\in\N \}) \\
        &\leq \sup (\{(a_k^* + b_k^*) (2k-1)^{\frac{1}{p}} (1+\log(2k-1))^\alpha; k\in\N \} \cup\\
        &\{(a_{k}^* + b_{k}^*) (2k)^{\frac{1}{p}} (1+\log(2k))^\alpha; k\in\N \}) \\
        &\ls \sup_{k\in\N} (a_{k}^* + b_{k}^*) (k)^{\frac{1}{p}} (1+\log(k))^\alpha \\
        &\leq \sup_{k\in\N} a_{k}^* (k)^{\frac{1}{p}} (1+\log(k))^\alpha + \sup_{k\in\N} b_{k}^* (k)^{\frac{1}{p}} (1+\log(k))^\alpha = \nrm{a}_{p,\infty;\alpha} + \nrm{b}_{p,\infty;\alpha}.
    \end{align*}
    The rest of (Q1) follows from the definition of the rearrangement. (P\ref{D:BSS-monotonie}) comes from
    Proposition~\ref{P:Properties-of-rearrangement} (ii) and the properties of $q$ (quasi-)norm. (P\ref{D:BSS-konvergence}) follows immediately from Proposition~\ref{P:Properties-of-rearrangement} (iv) and the monotone convergence theorem. For $E\subset\N, m(E)<\infty$ we have:
    \begin{equation*}
        \|\chi_E\|_{p,q;\alpha} = \left(\sum_{n=1}^{m(E)} n^{\frac{q}{p}-1}(1+\log(n))^{q\alpha}\right)^\frac{1}{q} <\infty
        \quad \text{if $q<\infty$}
    \end{equation*}
    and for $q=\infty$ we get:
    \begin{equation*}
        \|\chi_E\|_{p,q;\alpha} = \sup_{n\leq m(E)} n^\frac{1}{p}(1+\log(n))^{\alpha} <\infty.
    \end{equation*}
    This gives us (P\ref{D:BSS-char. posloupnost}). (P\ref{D:BSS-lok.l1}) holds, because (P\ref{D:BSS-char. posloupnost}), (P\ref{D:BSS-monotonie}), positive homogeneity and rearrangement--invariance.
\end{myproof}

\begin{proposition} \label{P:L-Z-BSS}
    Let $p\in (1,\infty], q\in [1,\infty], \alpha\in\R$ or $p=1, q\in [1,\infty], \alpha<-\frac{1}{q}$. Then $\ell^{(p,q;\alpha)}$ is a rearrangement--invariant BSS.
\end{proposition}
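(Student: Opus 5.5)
We verify the axioms (P1)–(P5) of Definition~\ref{D:BSS} for $\|\cdot\|_{(p,q;\alpha)}$, together with rearrangement invariance. Rearrangement invariance is immediate, since $a^{**}$ depends only on $a^*$, which in turn depends only on $a_*$.

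\emph{Norm axioms (P1).} Homogeneity follows from $(ca)^*=\abs{c}a^*$, which is a consequence of Proposition~\ref{P:Properties-of-rearrangement}~(ii). Definiteness holds because $a^{**}_1=a^*_1=\|a\|_\infty$. The triangle inequality follows from the sub-additivity $(a+b)^{**}_n\le a^{**}_n+b^{**}_n$ stated in Section~2. We multiply it by the positive weight $w_n=n^{\frac1p-\frac1q}(1+\log n)^\alpha$ and apply Minkowski's inequality in $\ell^q$; this is where $q\ge1$ is used. The identity $\|a\|=\|\abs a\|$ is clear.

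\emph{Lattice and Fatou axioms.} (P2) follows from Proposition~\ref{P:Properties-of-rearrangement}~(iii): the pointwise order $a^*\le b^*$ passes to the averages $a^{**}\le b^{**}$ and hence to the weighted $\ell^q$ norm. For (P3), Proposition~\ref{P:Properties-of-rearrangement}~(iv) gives $(a^m)^*\nearrow a^*$. Hence each average $(a^m)^{**}_n\nearrow a^{**}_n$, since it is a finite sum. The monotone convergence theorem (or the sup formula for $q=\infty$) then yields $\|a^m\|_{(p,q;\alpha)}\nearrow\|a\|_{(p,q;\alpha)}$. (P5) follows exactly as in the general computation after Definition~\ref{D:RI}, once (P4) is known.

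\emph{Main step: (P4).} This is where the restrictions on $p,q,\alpha$ enter. For $E$ with $m(E)=N$ we have $\chi_E^{**}(n)=1$ for $n\le N$ and $\chi_E^{**}(n)=N/n$ for $n>N$. The finite head of the sum is harmless. It therefore suffices to show that the tail is finite, namely
\[
\sum_{n>N} n^{-q}\,n^{\frac qp-1}(1+\log n)^{q\alpha}<\infty \quad (q<\infty),
\qquad \sup_{n>N} n^{\frac1p-1}(1+\log n)^\alpha<\infty \quad (q=\infty).
\]
If $p>1$, the exponent $\frac qp-q-1$ is less than $-1$, and Lemma~\ref{L:suma-mocnina-log}~(iv) gives convergence for every $\alpha$. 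For $q=\infty$, $p>1$, the power $n^{\frac1p-1}\to0$ dominates the logarithm.

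If $p=1$ and $q<\infty$, the exponent equals $-1$ and the log exponent is $q\alpha<-1$, because $\alpha<-\frac1q$. Lemma~\ref{L:suma-mocnina-log}~(v) then gives convergence. If $p=1$ and $q=\infty$, the condition reads $\alpha<0$, so $(1+\log n)^\alpha$ is bounded.

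This shows $\|\chi_E\|_{(p,q;\alpha)}<\infty$ and completes the verification. The only step needing care is this tail estimate for the borderline case $p=1$, which is exactly what the hypothesis $\alpha<-\frac1q$ is designed for.
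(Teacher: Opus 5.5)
Your proposal is correct and follows essentially the same route as the paper. You obtain (P1)--(P3) from the properties of the rearrangement, the sub-additivity of $a^{**}$ and Minkowski's inequality, and (P5) from the general computation after Definition~\ref{D:RI}; (P4) is reduced to the convergence of $\sum_n n^{\frac qp-q-1}(1+\log n)^{q\alpha}$ (resp.\ the boundedness of $n^{\frac1p-1}(1+\log n)^\alpha$), which is exactly where $p>1$ or $\alpha<-\frac1q$ is used. The only cosmetic difference is that the paper bounds $\chi_E^{**}(n)\le m(E)/n$ for all $n$ instead of splitting off a finite head.
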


\begin{myproof}
    Similarly as in the proof of Proposition \ref{P:properities-L-Z} we have $\|a\|_{(p,q;\alpha)} = 0 \iff a\equiv 0$, $\|ca\|_{(p,q;\alpha)} = \abs{c}\|a\|_{(p,q;\alpha)}$ and the triangular inequality follows from the sub-additivity of maximal sequence and the Minkowski inequality. Axioms (P\ref{D:BSS-abs})-(P\ref{D:BSS-konvergence}) hold from same arguments as in the proof of Proposition \ref{P:properities-L-Z}. To prove (P\ref{D:BSS-char. posloupnost}) let $E\subset \N$ be such that $m(E)<\infty$ and $q<\infty$, then we get
    \begin{align*}
        \| \chi_E \|^q_{(p,q;\alpha)} 
        &= \sum_{n=1}^\infty \left(\frac{1}{n} \sum_{k=1}^n \chi_{[1,m(E)]}(k) \right)^q n^{\frac{q}{p}-1}(1+\log(n))^{q\alpha}\\ 
        &\leq m(E)^q \sum_{n=1}^\infty n^{\frac{q}{p}-q-1} (1+\log(n))^{q\alpha} 
        < \infty,
    \end{align*}
    because either $\frac{q}{p}-q-1<-1$ or $\frac{q}{p}-q-1=-1$ and $q\alpha<-1$. If $q=\infty$ we have:
    \begin{align*}
        \| \chi_E \|^q_{(p,q;\alpha)} 
        &= \sup_{n\in\N} \frac{1}{n} \sum_{k=1}^n \chi_{[1,m(E)]}(k) n^\frac{1}{p} (1+\log(n))^\alpha \\ 
        &\leq m(E) \sup_{n\in\N} n^{\frac{1}{p}-1} (1+\log(n))^\alpha <\infty,
    \end{align*}
    because either $\frac{1}{p}-1<0$ or $\frac{1}{p}-1=0$ and $\alpha<0$.
    
    The axiom (P\ref{D:BSS-lok.l1}) is satisfied by the same argument as in Proposition \ref{P:properities-L-Z}. 
\end{myproof}

If $q=1$, $p\geq 1$ and $\alpha\leq0$, then the weight sequence is non-increasing, therefore by a result from \cite[Theorem 1]{Lorentz:51} and \cite[pg.~1932, line 7]{Ci-Le:19} we have $\| \cdot \|_{p,q;\alpha}$ is a norm, which with Proposition \ref{P:properities-L-Z} gives us that the space $\ell^{p,q;\alpha}$ is a r.i.~BSS.
Therefore, for parameters either $p,q \in (1,\infty], \alpha\in\R$ or $q=1, p\in [1,\infty], \alpha \leq0$ the space $\ell^{p,q;\alpha}$ either is a r.i.~BSS or coincides with one.

If $p,q,\alpha$ does not satisfy the conditions above, we still know, that $\ell^{p,q;\alpha}$ is a \ri q-BSS, fortunately in this paper, we will not work with those edge cases.

Important question for our application is how the fundamental sequence of Lorentz--Zygmund spaces behave near infinity. This is fully answered in the following proposition.

\begin{proposition} \label{P:fundmental-sequence-L-Z}
    Let $p,q \in (0,\infty], \alpha \in \R$.
    \begin{enumerate} [(i)]
        \item If $p<\infty$, then $\varphi_{\ell^{p,q;\alpha}} \approx n^\frac{1}{p}(1+\log(n))^\alpha$.
        
        \item If $p=\infty, q<\infty, \alpha > -\frac{1}{q}$, then $\varphi_{\ell^{p,q;\alpha}} \approx (1+\log(n))^{\alpha+\frac{1}{q}}$.
        
        \item If $p=\infty, q<\infty, \alpha = -\frac{1}{q}$, then $\varphi_{\ell^{p,q;\alpha}} \approx (1+ \log(1+\log(n)))^{-\alpha}$.

        \item If $p=\infty, q<\infty, \alpha < -\frac{1}{q}$, then $\varphi_{\ell^{p,q;\alpha}} \approx 1$.
        
        \item If $p=\infty, q=\infty, \alpha >0$, then $\varphi_{\ell^{p,q;\alpha}} \approx (1+\log(n))^\alpha$.

        \item If $p=\infty, q=\infty, \alpha \leq0$, then $\varphi_{\ell^{p,q;\alpha}} \approx 1$.
    \end{enumerate}
\end{proposition}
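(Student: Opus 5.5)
The plan is to compute $\varphi_{\ell^{p,q;\alpha}}(n)$ directly and then read off the asymptotics from Lemma~\ref{L:suma-mocnina-log}. Take $E_n=[1,n]\cap\N$. By Proposition~\ref{P:Properties-of-rearrangement}~(i), $(\chi_{E_n})^*=\chi_{E_n}$. Hence for $q<\infty$
\begin{equation*}
    \varphi_{\ell^{p,q;\alpha}}(n)=\Bigl(\sum_{k=1}^n k^{\frac{q}{p}-1}(1+\log(k))^{q\alpha}\Bigr)^{\frac1q},
\end{equation*}
and for $q=\infty$
\begin{equation*}
    \varphi_{\ell^{p,q;\alpha}}(n)=\max_{1\le k\le n}k^{\frac1p}(1+\log(k))^{\alpha}.
\end{equation*}
All six cases then reduce to estimating one partial sum or one finite maximum.

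For (i) with $q<\infty$, put $\lambda=\frac qp-1>-1$ and $\tau=q\alpha$. Lemma~\ref{L:suma-mocnina-log}~(i) gives a sum comparable to $n^{q/p}(1+\log(n))^{q\alpha}$, and taking the $q$-th root yields the claim. For (i) with $q=\infty$, the function $t\mapsto t^{1/p}(1+\log t)^{\alpha}$ is eventually increasing, since its logarithmic derivative is $\frac1t\bigl(\frac1p+\frac{\alpha}{1+\log t}\bigr)$. So the maximum over $k\le n$ equals a constant (coming from finitely many initial $k$) or the last term. Because the last term tends to infinity, the maximum is comparable to $n^{1/p}(1+\log(n))^\alpha$. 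For $n$ in the initial range, all quantities are bounded above and away from zero, so this range only affects the constants.

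For $p=\infty$ and $q<\infty$, the sum is $\sum_{k=1}^n k^{-1}(1+\log(k))^{q\alpha}$ with $\tau=q\alpha$.
\begin{itemize}
    \item If $\tau>-1$, Lemma~\ref{L:suma-mocnina-log}~(ii) gives $(1+\log(n))^{q\alpha+1}$, and the root gives (ii).
    \item If $\tau=-1$, Lemma~\ref{L:suma-mocnina-log}~(iii) gives $1+\log(1+\log(n))$. The $1/q$-th root is $(1+\log(1+\log(n)))^{1/q}=(1+\log(1+\log(n)))^{-\alpha}$, which is (iii).
    \item If $\tau<-1$, the series converges. The partial sums are then trapped between the first term, which is $1$, and the full sum, which is finite. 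This gives (iv).
\end{itemize}

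For $p=q=\infty$, the fundamental sequence is $\max_{k\le n}(1+\log(k))^\alpha$. If $\alpha>0$, this is increasing and equals $(1+\log(n))^\alpha$, which is (v). If $\alpha\le0$, the $k=1$ term is $1$ and every term is at most $1$, so the maximum equals $1$, which is (vi).

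The only genuinely delicate points are bookkeeping. The first is the eventual monotonicity argument in (i) for $q=\infty$ when $\alpha<0$. The second is that the lemma's $\approx$ statements may only be asserted for $n\ge n_0$. Both are handled by the same remark: finitely many $n$ change the constants but not the equivalence, because all quantities are positive and finite.
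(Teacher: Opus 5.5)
Your proposal is correct and follows essentially the same route as the paper. You compute $\|\chi_{[1,n]}\|_{p,q;\alpha}$ explicitly, apply the matching part of Lemma~\ref{L:suma-mocnina-log} when $q<\infty$, use eventual monotonicity of $k^{1/p}(1+\log(k))^{\alpha}$ when $p<\infty=q$, and bound directly in cases (iv)--(vi); your log-derivative justification of the monotonicity threshold and your remark that finitely many initial $n$ only affect the constants are slightly more explicit than the paper, which simply asserts the existence of $n_0$.
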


\begin{myproof}
    (i) If $q<\infty$, then the proposition is a direct consequence of Lemma \ref{L:suma-mocnina-log} (i).
    
    Let $q=\infty$. Then there is $n_0$ such that for every $n\geq n_0$ and $k\leq n$ we have
    \begin{equation*}
        k^\frac{1}{p}(1+\log(k))^\alpha \leq n^\frac{1}{p}(1+\log(n))^\alpha.
    \end{equation*}
    Then for every such $n\geq n_0$ we have:
    \begin{equation*}
        \|\chi_{E_n}\|_{p,\infty,\alpha} = \sup_{k\leq n} k^\frac{1}{p}(1+\log(k))^\alpha = n^\frac{1}{p}(1+\log(n))^\alpha. 
    \end{equation*}
    This immediately gives us the claim.

    (ii) The claim comes again immediately form Lemma \ref{L:suma-mocnina-log} (ii).
    
    (iii) This is again a direct consequence of Lemma \ref{L:suma-mocnina-log} (iii)

    (iv) From the fact that $q\alpha <-1$ we get that the sum $\sum_{k=1}^\infty k^{-1}(1+\log(k))^{q\alpha}$ converges, therefore we have:
    \begin{equation*}
        1 \leq \varphi_{\ell^{\infty,q;\alpha}} \leq (\sum_{k=1}^\infty k^{-1}(1+\log(k))^{q\alpha})^\frac{1}{q}.
    \end{equation*}

    (v), (vi) For $p=q=\infty$ we have
    \begin{equation*}
        \varphi_{\ell^{\infty,\infty;\alpha}}(n) = \sup_{k\leq n} (1+\log(k))^\alpha = 
        \begin{cases}
            (1+\log(n))^\alpha
            &\text{for $\alpha>0$}
                \\
            1
            &\text{for $\alpha\leq 0$}
        \end{cases}
    \end{equation*}
    which immediately gives us (v) and (vi).
\end{myproof}

The next proposition partially answers the question about the form of the associate space for Lorentz--Zygmund space. For our applications we do not need the edge cases.

\begin{proposition} \label{P:Dualita-L-Z}
    Let $p,q,\alpha$ satisfy one of the following:
    \begin{enumerate}
        \item [\textup{(i)}]$p \in [1,\infty),q\in (1,\infty), \alpha\in\R$,
        \item [\textup{(ii)}]$p \in [1,\infty), q=1, \alpha \leq0$,
        \item [\textup{(iii)}]$p \in (1,\infty],q=\infty, \alpha\geq0$.
    \end{enumerate}
    Then the associate space $(\ell^{p,q;\alpha})'$ coincides with $\ell^{p',q';-\alpha}$.
\end{proposition}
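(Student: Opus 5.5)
The plan is to reduce the computation of $\|a\|_{(\ell^{p,q;\alpha})'}$ to a duality statement for weighted $\ell^q$ norms of non-increasing sequences. Throughout write $w_n = n^{\frac{1}{p}-\frac{1}{q}}(1+\log(n))^{\alpha}$.

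\textbf{Step 1: reduction to non-increasing sequences.} Since the space is r.i., a Hardy--Littlewood type rearrangement inequality $\sum_n \abs{a_nb_n}\le\sum_n a_n^*b_n^*$ holds. It is proved via Proposition~\ref{P:Properties-of-rearrangement}(ii) and layer-cake summation over level sets. Moreover, equality can be approached by choosing $b$ equimeasurable with a given non-increasing sequence and arranged in the same order as $\abs{a}$; this is exact when $a$ has finite support, and the general case follows from (P3) and Proposition~\ref{P:Properties-of-rearrangement}(iv). Consequently
\begin{equation*}
\|a\|_{(\ell^{p,q;\alpha})'}=\sup\Big\{\sum_n a_n^*b_n:\ b\ \text{non-increasing},\ b\ge 0,\ \|b_nw_n\|_q\le1\Big\}.
\end{equation*}

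\textbf{Step 2: the upper bound $\|a\|_{(\ell^{p,q;\alpha})'}\ls\|a\|_{p',q';-\alpha}$.} Write $a_n^*b_n=(a_n^* n^{\frac{1}{p'}-\frac{1}{q'}}(1+\log(n))^{-\alpha})(b_nw_n)$. The exponents multiply to $n^{\frac1{p'}+\frac1p-\frac1{q'}-\frac1q}=n^0$, so H\"older's inequality with exponents $q,q'$ gives the bound directly. This step is valid in all three cases.

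\textbf{Step 3: the lower bound, the main obstacle.} Here one must use that the supremum is only over non-increasing $b$. Instead of testing with $b$, I would use summation by parts to express the pairing through $a^{**}$. For non-increasing $b$ we have $b_n=\sum_{k\ge n}(b_k-b_{k+1})$ (using $b_k\to0$, which holds since $\ell^{p,q;\alpha}\subset c_0$ in cases (i),(ii); in case (iii) with $p<\infty$ it holds as well, and for $p=\infty$ I add a constant term). This gives $\sum_n a_n^*b_n=\sum_k (b_k-b_{k+1})\,k\,a_k^{**}$. The standard Sawyer-type duality for the cone of non-increasing sequences then gives
\begin{equation*}
\|a\|_{(\ell^{p,q;\alpha})'}\approx\|a^{**}_n n^{\frac1{p'}-\frac1{q'}}(1+\log(n))^{-\alpha}\|_{q'}
\end{equation*}
for $q>1$, plus a boundary term. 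I would verify this directly by choosing the test sequence $b_n=\sum_{k\ge n} k^{-1}\,\big(k a_k^{**}\big)^{q'-1}\,\omega_k$ with an appropriate weight $\omega_k$, and estimating $\|b\|$ via the weighted Hardy inequality of \cite[Theorem 1]{Ben:91}, whose conditions are checked with Lemma~\ref{L:suma-mocnina-log}(i),(iv).

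For $q=1$, $\alpha\le 0$, I would instead take $b=\chi_{[1,n]}/\varphi(n)$. Proposition~\ref{P:fundmental-sequence-L-Z}(i) gives $\varphi(n)\approx n^{1/p}(1+\log(n))^{\alpha}$, so the supremum over $n$ yields $\sup_n a_n^{**} n^{1/p'}(1+\log(n))^{-\alpha}\ge\|a\|_{p',\infty;-\alpha}$. For $q=\infty$ I would take $b_n=1/w_n$, which is admissible because it is non-increasing when $\alpha\ge0$ and $p>1$.

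\textbf{Step 4: closing.} Since $a^{**}\ge a^*$, the lower bound dominates $\|a\|_{p',q';-\alpha}$, which combined with Step~2 yields the coincidence. I expect the main difficulty in Step~3 to be the endpoint $p=1$. There $p'=\infty$ and the Hardy-type estimate for $a^{**}$ versus $a^*$ fails, so I will only obtain the $\ell^{(\infty,q';-\alpha)}$-type quantity, and I would need to check that it still dominates the $\ell^{\infty,q';-\alpha}$ norm. This holds trivially, since $a^{**}\ge a^*$, so the lower bound survives.
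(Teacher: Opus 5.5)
Your proposal is correct. In cases (i) and (ii) it is the paper's proof. With $\omega_k=k^{q'/p'-q'}(1+\log(k))^{-\alpha q'}$, your test sequence is exactly the paper's $g(n)=\sum_{k\ge n}\varrho(k)/k$. It is estimated by the same Copson-type weighted Hardy inequality and Lemma~\ref{L:suma-mocnina-log}(i). For $q=1$ the paper likewise pairs $a$ with $\chi_{[1,n]}$ and uses $\varphi_{\ell^{p,1;\alpha}}(n)\approx n^{1/p}(1+\log(n))^{\alpha}$. Your Step~1 only makes explicit the resonance property the paper uses tacitly when it writes $\sum_k a_k^*g(k)\le\|g\|_{p,q;\alpha}\|a\|_{(\ell^{p,q;\alpha})'}$.

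The genuine difference is case (iii). The paper computes nothing there. It applies case (ii) to $\ell^{p',1;-\alpha}$ and then the Lorentz--Luxemburg identity $X''=X$. That identity requires knowing that $\ell^{p',1;-\alpha}$ coincides with a BSS, which rests on Lorentz's normability theorem for non-increasing weights. Your single extremal test $b_n=n^{-1/p}(1+\log(n))^{-\alpha}$ is admissible because $\alpha\ge0$. It gives $\|a\|_{(\ell^{p,\infty;\alpha})'}\ge\sum_n a_n^*n^{-1/p}(1+\log(n))^{-\alpha}=\|a\|_{p',1;-\alpha}$. Combined with Step~2, this is an isometric identity obtained without any duality theory or normability input. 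It also does not actually need $p>1$, despite your remark. The paper's route is shorter only once those external results are granted.

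One detail should be added in case (i); the paper skips it too. Normalizing your test sequence and dividing by $\|a\|_{(p',q';-\alpha)}^{q'-1}$ requires this quantity to be finite. So first treat finitely supported $a$, where finiteness follows from $p<\infty$ since then $a_n^{**}\lesssim 1/n$. Then pass to general $a$ via Proposition~\ref{P:Properties-of-rearrangement}(iv) and monotone convergence.
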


\begin{myproof}
    The embedding $\ell^{p',q';-\alpha} \hookrightarrow (\ell^{p,q;\alpha})'$ is proven in the same way, regardless of conditions (i)-(iii). Let $a,b\in\mathcal{M}$, then by using the Hölder inequality and the discrete version of the Hardy--Littlewood inequality, see e.g.~\cite{Har:88,Ben:88}, we get
    \begin{align*}
        \sum_{n=1}^\infty a_n b_n 
        &\leq \sum_{n=1}^\infty a_n^* b_n^*
        = \sum_{n=1}^\infty a_n^* n^{\frac{1}{q}-\frac{1}{p}} (1+\log(n))^{-\alpha} b_n^* n^{\frac{1}{p}-\frac{1}{q}}(1+\log(n))^\alpha \\
        & \leq \| a_n^* n^{\frac{1}{q}-\frac{1}{p}} (1+\log(n))^{-\alpha}\|_{q'} \| b_n^* n^{\frac{1}{p}-\frac{1}{q}}(1+\log(n))^\alpha \|_{q}
        = \| a\|_{p',q';-\alpha} \| b\|_{p,q;\alpha}.
    \end{align*}
    Thus by taking the supremum over $\| b\|_{p,q;\alpha} \leq 1$ we get
    \begin{equation*}
        \|a\|_{(\ell^{p,q;\alpha})'} = \sup\left\{\sum_{n=1}^\infty a_n b_n :\| b\|_{p,q;\alpha} \leq 1\right\} \leq \| a\|_{p',q';-\alpha}.
    \end{equation*}
    This gives us $\ell^{p',q';-\alpha} \hookrightarrow (\ell^{p,q;\alpha})'$.

    Suppose (i) and let $a\in\mathcal{M}$, set
    \begin{equation*}
        \varrho(n) = (a^{**}_n)^{q'-1} n^{\frac{q'}{p'}-1}(1+\log(n))^{-\alpha q'}
    \end{equation*}
    and
    \begin{equation*}
        g(n) = \sum_{k=n}^\infty \frac{\varrho(k)}{k}.
    \end{equation*}
    From this and Proposition \ref{P:rearrangement-of-non-increasing} (i) we have $g^* = g$. Using the Fubini Theorem and the Hölder inequality for associate spaces we get
    \begin{align*}
        \| a \|^{q'}_{p',q' ;-\alpha} 
        &\leq \sum_{n=1}^\infty (a^{**}_n)^{q'} n^{\frac{q'}{p'}-1}(1+\log(n))^{-\alpha q'}
        =\sum_{n=1}^\infty a^{**}_n \varrho(n) 
        = \sum_{n=1}^\infty \frac{1}{n} \sum_{k=1}^n a_k^* \varrho(n) \\
        &= \sum_{k=1}^\infty \sum_{n=k}^\infty \frac{1}{n} a_k^* \varrho(n)
        = \sum_{k=1}^\infty a_k^* g(k) 
        \leq \|g\|_{p,q;\alpha} \|a\|_{(\ell^{p,q;\alpha})'}.
    \end{align*}
    To prove $(\ell^{p,q;\alpha})' \hookrightarrow \ell^{p',q';-\alpha}$ it is enough to show that $\|g\|_{p,q;\alpha} \ls \|a\|_{\ell^{p',q';-\alpha}}^{q'-1}$. \\
    Set $b_n = (a^{**}_n)^{q'-1}n^{\frac{q'}{p'}-2}(1+\log(n))^{-q'\alpha}$, $u_n = n^{\frac{q}{p}-1}(1+\log(n))^{q\alpha}$, then by using the weighted Hardy inequality we have
    \begin{align} \label{E:L-Z-associate-Hardy}
        \|g\|_{p,q;\alpha} 
        &= \left(\sum_{n=1}^\infty \left( \sum_{k=n}^\infty (a_k^{**})^{q'-1}k^{\frac{q'}{p'}-2} (1+\log(k))^{-q'\alpha}\right)^q n^{\frac{q}{p}-1}(1+\log(n))^{q\alpha} \right)^\frac{1}{q} \\
        & = \left(\sum_{n=1}^\infty \left( \sum_{k=n}^\infty b_k \right)^q u_n \right)^\frac{1}{q} \nonumber
        \leq q \left(\sum_{n=1}^\infty u_n^{1-q} \left( \sum_{k=1}^n u_k\right)^q b_n^q \right)^\frac{1}{q} \\
        &\leq q \left(\sum_{n=1}^\infty \left(n^{\frac{1}{p}-\frac{1}{q}}(1+\log(n))^\alpha\right)^{q(1-q)} \left( \sum_{k=1}^n n^{\frac{q}{p}-1}(1+\log(n))^{q\alpha}\right)^q b_n^q \right)^\frac{1}{q}. \nonumber
    \end{align}
    Because $\frac{q}{p}-1>-1$, then from Lemma \ref{L:suma-mocnina-log} (i) and \eqref{E:L-Z-associate-Hardy} we get
    \begin{align} \label{E:L-Z-associate-lemma} 
        \|g\|_{p,q;\alpha}
        &\ls \left(\sum_{n=1}^\infty \left( n^{(\frac{1}{p}-\frac{1}{q})(1-q) +\frac{q}{p}} (1+\log(n))^{\alpha (1-q) +q\alpha} b_n \right)^q \right)^\frac{1}{q}\\
        &=  \left(\sum_{n=1}^\infty \left( n^{\frac{1}{p}+\frac{1}{q'}} (1+\log(n))^{\alpha} b_n \right)^q \right)^\frac{1}{q}
        =  \| n^{\frac{1}{p}+\frac{1}{q'}} (1+\log(n))^{\alpha} b_n \|_q. \nonumber
    \end{align}
    But because $\frac{q'}{q}= q'-1$ we have
    \begin{align*}
        \|a\|^{q'-1}_{p',q';-\alpha}
        &= \left(\sum_{n=1}^\infty (a_n^{**})^{q'} n^{\frac{q'}{p'}-1}(1+\log(n))^{-\alpha q'}\right)^{\frac{q' -1}{q'}}\\ 
        &= \left(\sum_{n=1}^\infty (a_n^{**})^{q (q'-1)} n^{q\frac{q'-1}{p'}-1}(1+\log(n))^{-q\alpha (q'-1)}\right)^{\frac{1}{q}} \\
        &= \| (a_n^{**})^{q'-1} n^{\frac{q'-1}{p'}-\frac{1}{q}} (1+\log(n))^{-\alpha (q'-1)} \|_q
        = \| n^{\frac{1}{p}+\frac{1}{q'}} (1+\log(n))^{\alpha} b_n \|_q, 
    \end{align*}
    giving us the desired result.

    Now suppose (ii). From the form of fundamental sequence for associate space and Proposition \ref{P:fundmental-sequence-L-Z} we have
    \begin{equation*}
        \varphi_{(\ell^{p,1;\alpha})'} (n)
        = \frac{n}{\varphi_{\ell^{p,1;\alpha}}(n)} 
        \approx n^{1-\frac{1}{p}} (1+\log(n))^{-\alpha}
        = n^{\frac{1}{p'}} (1+\log(n))^{-\alpha}.
    \end{equation*}
    Then by using the Hölder inequality for associate spaces we have
    \begin{equation*}
        \sum_{k=1}^n a_n^* \leq \| \chi_{[1,n]} \|_{p,1;\alpha} \| a \|_{(\ell^{p,1;\alpha})'}
        = \varphi_{\ell^{p,q;\alpha}}(n) \| a \|_{(\ell^{p,1;\alpha})'}
        = \frac{n}{\varphi_{(\ell^{p,q;\alpha})'}(n)} \| a \|_{(\ell^{p,1;\alpha})'} 
    \end{equation*}
    for $a\in\mathcal{M}, n\in\N$.
    Thus we get
    \begin{equation*}
        \|a\|_{p',\infty;-\alpha} \leq \sup_{n\in\N} a_n^{**} n^\frac{1}{p'} (1+\log(n))^{-\alpha}
        \ls \sup_{n\in\N} a_n^{**} \varphi_{(\ell^{p,1;\alpha})'}(n)
        \leq \| a \|_{(\ell^{p,1;\alpha})'}.
    \end{equation*}

    Suppose (iii), because $\ell^{p',q';-\alpha}$ is a BSS we have from \cite[Chapter 1, Theorem 2.7]{Ben:88} and the case~(ii)
    \begin{equation*}
        (\ell^{p,q;\alpha})' = ((\ell^{p',q';-\alpha})')' = (\ell^{p',q';-\alpha})'' = \ell^{p',q';-\alpha}.
    \end{equation*}
    This concludes the proof.    
\end{myproof}

\section{Relations between Orlicz and Lorentz--Zygmund spaces}

In this chapter, we will characterize when a Lorentz--Zygmund space is an Orlicz space. More specifically, for which parameters $p,q, \alpha$ there exists a Young function $A$ such that the norms $\| \cdot \|_{p,q;\alpha}$ and $\| \cdot \|_A$ are equivalent.

To achieve this, we will need to formulate a few lemmas. The scale of L--Z spaces is sharp in the sense, that for different parameters $p,q,\alpha$ we almost always get a different space.

\begin{lemma} \label{L:L-Z-difference}
    For $p,r,q,s \in (0,\infty], \alpha,\beta \in \R$, we have $\ell^{p,q;\alpha} = \ell^{r,s;\beta}$ if and only if one of the following conditions holds:
    \begin{enumerate}
        \item [\textup{(1)}]$p=r, q=s, \alpha = \beta$,
        \item [\textup{(2)}]$(p,q;\alpha)$ and $(r,s;\beta)$ each satisfy one of the conditions (iv) or (vi) from Proposition~\ref{P:fundmental-sequence-L-Z}. 
    \end{enumerate}
    Furthermore, if \textup{(2)} is true, then $\ell^{p,q;\alpha} = \ell^{r,s;\beta} = \ell^\infty$.
\end{lemma}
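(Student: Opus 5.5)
The ``if'' direction is short. If (1) holds there is nothing to prove. If (2) holds, then by Proposition~\ref{P:fundmental-sequence-L-Z} (iv) or (vi) we have $\varphi_{\ell^{p,q;\alpha}}\approx 1$, and likewise $\varphi_{\ell^{r,s;\beta}}\approx 1$. Both spaces are r.i.\ qBSS by Proposition~\ref{P:properities-L-Z}, so Proposition~\ref{P:embeding-and-fundamental-sequence} gives $\ell^{p,q;\alpha}=\ell^\infty=\ell^{r,s;\beta}$. This also proves the final sentence of the lemma.

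For the ``only if'' direction, assume $\ell^{p,q;\alpha}=\ell^{r,s;\beta}$. Testing the equivalence of the quasi-norms on characteristic sequences gives $\varphi_{\ell^{p,q;\alpha}}\approx\varphi_{\ell^{r,s;\beta}}$.

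\emph{Bounded case.} If one of the two fundamental sequences is bounded, so is the other. The list in Proposition~\ref{P:fundmental-sequence-L-Z} shows that boundedness occurs exactly in cases (iv) and (vi), so (2) holds.

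\emph{Unbounded case: reading off parameters.} Otherwise both fundamental sequences are unbounded, and we read the parameters off their growth using Proposition~\ref{P:fundmental-sequence-L-Z}.
\begin{itemize}
\item A power $n^{1/p}$ with $p<\infty$ cannot be equivalent to any power of $\log$ or of $\log\log$. Hence $p<\infty$ if and only if $r<\infty$, and in that case $p=r$ and $\alpha=\beta$.
\item If $p=r=\infty$, a power of $\log n$ is never equivalent to a power of $\log\log n$. So either both triples are in case (iii), which forces $\alpha=\beta$ and hence $q=s$ (since $q=-1/\alpha$ there), or both are in cases (ii)/(v). In the latter case we obtain $c:=\alpha+\frac1q=\beta+\frac1s>0$, with the convention $1/\infty=0$.
\end{itemize}
What remains is to show $q=s$ in the case $p=r<\infty$, $\alpha=\beta$, and in the case $p=r=\infty$ with a common value $c>0$. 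In both cases $\alpha$ (resp.\ $\beta$) is then determined by $c$ and $q$ (resp.\ $s$). Suppose, for contradiction, that $q<s$ (so $q<\infty$); it suffices to exhibit a non-increasing sequence lying in $\ell^{r,s;\beta}$ but not in $\ell^{p,q;\alpha}$.

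\emph{Test sequences.}
\begin{itemize}
\item For $p<\infty$, take $a_n=n^{-1/p}(1+\log n)^{-\alpha-\gamma}$ with $\frac1s<\gamma\le\frac1q$. This is eventually non-increasing, so after modifying finitely many terms it equals its rearrangement. Then
\[
\|a\|_{p,q;\alpha}^q\approx\sum_n n^{-1}(1+\log n)^{-\gamma q}=\infty,
\]
while the corresponding $s$-quantity is finite because $\gamma s>1$, or, when $s=\infty$, because $\sup_n(1+\log n)^{-\gamma}<\infty$.
\item For $p=\infty$, pure logarithmic powers are not fine enough, since both spaces have the same critical exponent $c$. Instead take
\[
a_n=(1+\log n)^{-c}\bigl(1+\log(1+\log n)\bigr)^{-\delta}, \qquad \tfrac1s<\delta\le\tfrac1q,
\]
which is non-increasing because $c>0$. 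Then
\[
\|a\|_{\infty,q;\alpha}^q=\sum_n n^{-1}(1+\log n)^{-1}\bigl(1+\log(1+\log n)\bigr)^{-\delta q},
\]
which diverges by the same integral comparison as in Lemma~\ref{L:suma-mocnina-log}~(iii) (substitute $x=\log(1+\log t)$). The $s$-quantity converges since $\delta s>1$; if $s=\infty$, then $\beta=c$ and the supremum equals $\sup_n\bigl(1+\log(1+\log n)\bigr)^{-\delta}<\infty$.
\end{itemize}
In either case we contradict the assumed equality of the spaces, so $q=s$, and with it all three parameters coincide, i.e.\ (1) holds.

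The main obstacle is this last step: when the fundamental sequences agree, distinguishing $q$ from $s$ requires test sequences tuned exactly to the critical exponent. In the case $p=\infty$ this means adding an iterated-logarithm correction, and one must verify the divergence and convergence of the resulting log-log sums, which Lemma~\ref{L:suma-mocnina-log} does not state directly.
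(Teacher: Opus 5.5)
Your proof is correct and follows essentially the same route as the paper: equivalence of the fundamental sequences via Proposition~\ref{P:fundmental-sequence-L-Z} reduces the problem to $q\neq s$ with either $p=r<\infty,\ \alpha=\beta$ or $p=r=\infty,\ \alpha+\frac1q=\beta+\frac1s>0$, and then power--logarithm and iterated-logarithm test sequences separate the spaces (the paper assumes $q>s$ and builds a sequence in $\ell^{p,q;\alpha}\setminus\ell^{r,s;\beta}$, which is the mirror image of your choice). Your bookkeeping is somewhat more careful than the paper's: you note that $n^{-1/p}(1+\log n)^{-\alpha-\gamma}$ is only eventually non-increasing, and you explicitly handle the bounded case and case (iii).
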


\begin{myproof}
    If (1) holds, the proposition is trivial. If (2) is true then from Proposition~\ref{P:fundmental-sequence-L-Z} we get
    \begin{equation*}
        \varphi_{\ell^{p,q;\alpha}} \approx \varphi_{\ell^{r,s;\beta}} \approx 1. 
    \end{equation*}
    Then using Proposition~\ref{P:embeding-and-fundamental-sequence} we have $\ell^{p,q;\alpha} = \ell^\infty = \ell^{r,s;\beta}$.

    Let $\ell^{p,q;\alpha} = \ell^{r,s;\beta}$ and for contradiction suppose that both (1) and (2) fail. Then $\varphi_{\ell^{p,q;\alpha}} \approx \varphi_{\ell^{r,s;\beta}}$. From Proposition \ref{P:fundmental-sequence-L-Z} we get two possible cases:
    \begin{enumerate} [(a)]
        \item $p=r, \alpha = \beta$ and $q\neq s$,
        \item $p=r=\infty, \alpha + \frac{1}{q} = \beta + \frac{1}{s}>0$ and $q \neq s$. 
    \end{enumerate}
    Without loss of generality suppose $q>s$.

    If (a) holds, find $u\in \R$ such that $-\frac{1}{s}-\alpha < u <-\frac{1}{q} -\alpha$ and define
    \begin{equation*}
        a_n = n^{-\frac{1}{p}}(1+\log(n))^u.
    \end{equation*}
    From the choice of $u$ we get that $a$ is non-increasing, therefore by Proposition \ref{P:rearrangement-of-non-increasing} (i) $a=a^*$. Then from the definition $a\in \ell^{p,q;\alpha}$ but $a\notin \ell^{r,s;\beta}$, thus giving us the contradiction.
    
    If (b) holds, find $v \in \R$ such that $-\frac{1}{s}< v <-\frac{1}{q}$ and for $n\in \N$ set
    \begin{equation*}
        a_n = (1+\log(n))^{-(\alpha+\frac{1}{q})} (1+\log(1+\log(n)))^v.
    \end{equation*}
    Because $v<0$, $a$ is non-increasing. Therefore again by Proposition \ref{P:rearrangement-of-non-increasing} (i) we have $a=a^*$. Then again $a\in \ell^{p,q;\alpha}\setminus \ell^{r,s;\beta}$.
\end{myproof}

Now recall Example \ref{L:Young-functions}, we need to show that those functions have the desired properties, \ie that they create the same spaces as certain Lorentz--Zygmund spaces.

In the very technical proof of Lemma \ref{L:odhad-p<infty} we will use a certain version of Young inequality. To avoid confusion, we will formulate it here explicitly. The Young inequality could be found in \cite[Lemma 8.3]{Opick:99}.

\begin{lemma} \label{L:Young-inequality}
    For $a,b>1$ and $\lambda >0$ we have
    \begin{equation*}
        ab\leq \exp(a^\frac{1}{\lambda}) + b (1+\log(b))^\lambda.
    \end{equation*}
\end{lemma}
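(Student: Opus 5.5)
The natural route is the usual case split behind every Young-type inequality: compare $a$ with the ``critical'' size $(1+\log b)^\lambda$, and let each term on the right dominate $ab$ in one of the two regimes.

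\emph{First case:} $a\le (1+\log b)^\lambda$. Then trivially
\begin{equation*}
ab\le b(1+\log b)^\lambda,
\end{equation*}
and the exponential term is not needed.

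\emph{Second case:} $a>(1+\log b)^\lambda$, i.e.\ $a^{1/\lambda}>1+\log b$. Then $b<\exp(a^{1/\lambda}-1)$, so
\begin{equation*}
ab< a\,e^{-1}\exp(a^{1/\lambda}).
\end{equation*}
Everything now hinges on bounding the factor $a e^{-1}$. This is the step I expect to be the main obstacle, since $a$ is unbounded. My first attempt would be to move the threshold. Split instead according to whether $b\le \exp(\tfrac12 a^{1/\lambda})$ or not.

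\begin{itemize}
\item If $b\le \exp(\tfrac12 a^{1/\lambda})$, then
\begin{equation*}
ab\le \bigl(a\exp(-\tfrac12 a^{1/\lambda})\bigr)\exp(a^{1/\lambda}).
\end{equation*}
The bracket is bounded by $\sup_{s>0} s^\lambda e^{-s/2}=(2\lambda/e)^\lambda$, where we substitute $s=a^{1/\lambda}$.
\item If $b>\exp(\tfrac12 a^{1/\lambda})$, then $a<(2\log b)^\lambda\le 2^\lambda(1+\log b)^\lambda$, so $ab\le 2^\lambda b(1+\log b)^\lambda$.
\end{itemize}

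This yields the inequality with a multiplicative constant $C_\lambda=\max\{2^\lambda,(2\lambda/e)^\lambda\}$ in front of the right-hand side. For small $\lambda$ (roughly $\lambda\le 1$, where both bounding factors are at most $1$ for suitable thresholds) one can tune the threshold to get constant $1$. For large $\lambda$ I would first test numerically whether constant $1$ is attainable.

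In fact it is not. For $\lambda=10$, $a=e^{10}$, $b=2$ we get $ab\approx 4.4\cdot10^4$, while the right-hand side is $\exp(e)+2(1+\log 2)^{10}\approx 4\cdot10^2$. So the statement as worded holds only up to the constant $C_\lambda$.

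That weaker form is exactly what is needed where the lemma is used. It is applied pointwise inside a sum to prove an estimate of the type $\|\cdot\|_A\ls\|\cdot\|_{p,q;\alpha}$ (or its dual), and there a $\lambda$-dependent constant is absorbed into $\ls$. Equivalently, one can apply the constant-free version to $a/C_\lambda$ after using the convexity of $A$. I would therefore present the case-split proof with the constant $C_\lambda$ made explicit, and remark that the form cited from \cite{Opick:99} must be read with such a constant.
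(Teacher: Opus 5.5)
The paper gives no proof of this lemma; it only cites \cite[Lemma 8.3]{Opick:99}. Your proposal is correct: the lemma as printed is false, and the version you prove is the one the paper actually needs. Your counterexample checks out: $2e^{10}\approx 4.4\cdot 10^4$ against $e^{e}+2(1+\log 2)^{10}\approx 15+387$. Your two-case argument also correctly gives $ab\le C_\lambda\bigl(\exp(a^{1/\lambda})+b(1+\log b)^\lambda\bigr)$ with $C_\lambda=\max\{2^\lambda,(2\lambda/e)^\lambda\}$.

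The picture can be made sharp. The map $b\mapsto b(1+\log b)^\lambda$ is convex on $[1,\infty)$. So for fixed $a\ge 1+\lambda$, the quantity $ab-b(1+\log b)^\lambda$ is maximized at $u=1+\log b$ with $a=u^\lambda+\lambda u^{\lambda-1}$. There it equals $\lambda u^{\lambda-1}e^{u-1}$, while $u\le a^{1/\lambda}\le u\,e^{1/u}\le u+e$. For $a<1+\lambda$ the supremum is only $a-1$. Hence the constant-free inequality holds for $\lambda\le 1$. It fails for every $\lambda>1$: take $u$ so large that $\lambda u^{\lambda-1}>e^{1+e}$.

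Your side remark that one can \emph{tune the threshold} to get constant $1$ for small $\lambda$ is wrong as a method. For $a>e$ and $\max\{1,e^{a^{1/\lambda}}/a\}<b<e^{a^{1/\lambda}-1}$, we have both $ab>\exp(a^{1/\lambda})$ and $ab>b(1+\log b)^\lambda$. So no case split in which one term alone dominates can give constant $1$. The case $\lambda\le 1$ needs both terms at once, e.g.\ via the optimization over $b$ above.

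Your description of the application is slightly off, though the conclusion stands. The lemma is used only in the proof of Lemma \ref{L:odhad-p<infty}, for the implication \eqref{E:odhad-p<infty-n}$\Rightarrow$\eqref{E:odhad-p<infty-a_n}. There only the finiteness of $\sum_k e^{-k}a_kb_k$ matters, so the factor $C_\lambda$ is harmless. No Young function enters that step. Drop the remark about applying ``the constant-free version to $a/C_\lambda$'', since no constant-free version exists for $\lambda>1$. If you prefer to avoid a multiplicative constant, absorb $(2\lambda/e)^\lambda$ into the exponent instead, giving $ab\le\exp(c_\lambda a^{1/\lambda})+2^\lambda b(1+\log b)^\lambda$, and rescale $a_k$ accordingly.
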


\begin{lemma} \label{L:odhad-p<infty}
    Let $1\leq p <\infty$ and $\lambda\ge0$ . Then for every $a\in \mathcal{M}$, we have
    \begin{equation} \label{E:odhad-p<infty-n}
        \sum_{n=1}^\infty (a_n^*)^p (1+\log(n))^\lambda < \infty
    \end{equation}
    if and only if
    \begin{equation} \label{E:odhad-p<infty-a_n}
        \sum_{n=1}^\infty (a_n^*)^p (1-\log(a_n^*))^\lambda < \infty.
    \end{equation}
\end{lemma}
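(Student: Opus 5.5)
Write $b=a^*$; $b$ is non-increasing and non-negative. If either sum is finite then $b_n\to 0$: from \eqref{E:odhad-p<infty-n} because the terms are summable, and from \eqref{E:odhad-p<infty-a_n} because $(1-\log b_n)^\lambda\ge 1$ once $b_n\le 1$, while at most finitely many terms can exceed $1$. The finitely many indices with $b_n\geq 1/e$ affect neither sum, since each term there is finite ($b_1<\infty$ follows from either condition). So we may restrict attention to the tail where $b_n<1/e$, i.e.\ $1-\log b_n>2$. For $\lambda=0$ both conditions read $\sum b_n^p<\infty$, so assume $\lambda>0$.

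\emph{\eqref{E:odhad-p<infty-a_n} $\Rightarrow$ \eqref{E:odhad-p<infty-n}.} Split $\N$ into $I=\{n: b_n\ge n^{-2/p}\}$ and its complement. On $I$ we have $-\log b_n\le \tfrac{2}{p}\log n$, which gives $1+\log n\ge \tfrac{p}{2}(1-\log b_n)$; this is the wrong direction, so reverse the roles. On $I$ we have $1-\log b_n \le 1+\tfrac2p\log n\le \max(1,\tfrac2p)(1+\log n)$, which bounds \eqref{E:odhad-p<infty-a_n} by \eqref{E:odhad-p<infty-n}; that is the other implication. For the present direction, use $I'=\{n: b_n\ge n^{-1/(2p)}\}$ instead. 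Since $b$ is non-increasing and tends to $0$, $I'$ is finite. On $\N\setminus I'$ we have $-\log b_n\ge \tfrac{1}{2p}\log n$, so $1+\log n\lesssim 1-\log b_n$ and the terms of \eqref{E:odhad-p<infty-n} are dominated by those of \eqref{E:odhad-p<infty-a_n}. This direction therefore uses only monotonicity, via $b_n\to0$.

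\emph{\eqref{E:odhad-p<infty-n} $\Rightarrow$ \eqref{E:odhad-p<infty-a_n}.} Split at $b_n= n^{-2/p}$. If $b_n\ge n^{-2/p}$, then $1-\log b_n\lesssim 1+\log n$ as computed above, so the term is dominated by the corresponding term of \eqref{E:odhad-p<infty-n}. If $b_n<n^{-2/p}$, set $x=b_n^p<n^{-2}$. The function $x\mapsto x(1-\tfrac1p\log x)^\lambda$ is increasing for small $x$, so this term is at most $n^{-2}(1+\tfrac2p\log n)^\lambda$, which is summable in $n$.

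So the proof is an elementary splitting argument, and I do not expect to need Lemma~\ref{L:Young-inequality}; it could serve as an alternative route, applying it with $a=1-\log b_n$ and $b$ related to $n$ to produce the same dichotomy. The main obstacle is the second direction: checking that the monotonicity of $x(1-\log x)^\lambda$ holds uniformly on the small-$b_n$ set, and tracking the constants in the comparison $1-\log b_n\lesssim 1+\log n$, including the case $p=1$ where the factor $\tfrac2p=2$ appears.
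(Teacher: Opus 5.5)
\textbf{A gap in the direction \eqref{E:odhad-p<infty-a_n} $\Rightarrow$ \eqref{E:odhad-p<infty-n}.} You claim that $I'=\{n: b_n\ge n^{-1/(2p)}\}$ is finite ``since $b$ is non-increasing and tends to $0$'', and you conclude that this direction needs only $b_n\to0$. That inference is false. For example, $b_n=(1+\log n)^{-1}$ is non-increasing, tends to $0$, and satisfies $b_n\ge n^{-1/(2p)}$ for all large $n$.

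What you need is polynomial decay of $b$, and it has to be extracted from the hypothesis. On the tail $(1-\log b_n)^\lambda\ge 1$, so $S=\sum_n b_n^p<\infty$. Since $b$ is non-increasing, $n b_n^p\le\sum_{k\le n} b_k^p\le S$, i.e.\ $b_n\le S^{1/p}n^{-1/p}$. This is $<n^{-1/(2p)}$ once $n>S^2$.

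With this step inserted your argument is correct, and it then coincides with the paper's. The paper first proves $\sup_n n^{1/p}a_n^*=K<\infty$ and then uses $1+\log(n^{1/p}/K)\le 1-\log a_n^*$. That paragraph of yours also still contains the discarded attempt with $I$, which should be removed.

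\textbf{The other direction.} Your proof of \eqref{E:odhad-p<infty-n} $\Rightarrow$ \eqref{E:odhad-p<infty-a_n} is correct and genuinely different from the paper's. The paper works with the level blocks $\{n_k,\dots,n_{k+1}-1\}$ on which $e^{-(k+1)/p}<a_n^*\le e^{-k/p}$. It bounds the sum by $\sum_k e^{-k}k^\lambda(n_{k+1}-n_k)$, splits this with the Young-type inequality of Lemma~\ref{L:Young-inequality}, and telescopes $t(1+\log t)^\lambda$ to return to \eqref{E:odhad-p<infty-n}.

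Your termwise dichotomy at $b_n=n^{-2/p}$ replaces all of this. The obstacle you flagged is harmless. For $g(x)=x(1-\tfrac1p\log x)^\lambda$ one has $g'(x)=(1-\tfrac1p\log x)^{\lambda-1}(1-\tfrac1p\log x-\tfrac{\lambda}{p})$, so $g$ is increasing on $(0,\min\{1,e^{p-\lambda}\})$. Hence $g(b_n^p)\le n^{-2}(1+\tfrac2p\log n)^\lambda$ whenever $b_n<n^{-2/p}$ and $n\ge e^{(\lambda-p)/2}$, which excludes only finitely many $n$.

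This gives, up to finitely many terms, the explicit estimate $\sum_n b_n^p(1-\log b_n)^\lambda\lesssim\sum_n b_n^p(1+\log n)^\lambda+\sum_n n^{-2}(1+\log n)^\lambda$. That estimate does not even use that $b$ is non-increasing, whereas the paper's level-set construction does. So for this direction your route is shorter and more general, and indeed needs no Lemma~\ref{L:Young-inequality}.

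\textbf{A caveat shared with the paper.} Your reduction asserts that finiteness of \eqref{E:odhad-p<infty-a_n} forces $b_n\to0$. But if $a_n^*=e$ for all $n$ and $\lambda>0$, every term of \eqref{E:odhad-p<infty-a_n} vanishes while \eqref{E:odhad-p<infty-n} diverges. The statement tacitly requires something like $a^*\le 1$, and the paper makes the same tacit assumption. You should state this assumption explicitly rather than claim to derive it.
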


Because $\lim_{t\to 0_+} t^p (1-\log(t))^\lambda = 0$, \eqref{E:odhad-p<infty-a_n} is consistent.

\begin{proof}[Proof of Lemma \ref{L:odhad-p<infty}]
    Let $a\in\mathcal{M}$ satisfying either \eqref{E:odhad-p<infty-n} or \eqref{E:odhad-p<infty-a_n}, then $a^*_n \to 0$, therefore we can without loss of generality assume that $a^* < 1$ and $a_n^* \neq 0$.

    If $\lambda=0$, the claim trivially holds.

    Now suppose $\lambda>0$ and $a$ satisfy \eqref{E:odhad-p<infty-a_n}. First we need to prove that 
    \begin{equation*}
        \sup_{n\in\N} n^\frac{1}{p}a^*_n = K <\infty.
    \end{equation*}
    For contradiction suppose that there exists an increasing sequence $\{n_k\}_{n=1}^\infty$ of natural numbers such that $n_k^\frac{1}{p} a_{n_k} \to \infty$. For every $k\in \N$ we have
    \begin{align*} 
        \sum_{n=1}^\infty (a_n^*)^p (1-\log(a_n^*))^\lambda & \nonumber
        \geq \sum_{n=\floor{\frac{n_k}{2}}}^{n_k} (a_n^*)^p (1-\log(a_n^*))^\lambda \\ 
        &\geq (a_{n_k}^*)^p (1-\log(a_{\floor{\frac{n_k}{2}}}^*))^\lambda \sum_{n=\floor{\frac{n_k}{2}}}^{n_k} 1 \\ \nonumber
        &\geq \frac{1}{2} n_k (a_{n_k}^*)^p (1-\log(a_1^*))^\lambda \to \infty
        \quad \text{for $k\to\infty$,}
    \end{align*}
    giving us the contradiction. Therefore for every $n\in\N$ we have $\frac{n^\frac{1}{p}}{K}\leq \frac{1}{a_n^*}$. It is easy to see that \eqref{E:odhad-p<infty-n} is equivalent to
    \begin{equation*}
        \sum_{n=1}^\infty (a_n^*)^p \left(1+\log\left(\frac{n^\frac{1}{p}}{K}\right)\right)^\lambda < \infty
    \end{equation*}
    but we have
    \begin{equation*}
        \sum_{n=1}^\infty (a_n^*)^p \left(1+\log\left(\frac{n^\frac{1}{p}}{K}\right)\right)^\lambda 
        \leq \sum_{n=1}^\infty (a_n^*)^p (1-\log(a_n^*))^\lambda < \infty
    \end{equation*}
    as desired.

    Now suppose that for $a\in\mathcal{M}$ \eqref{E:odhad-p<infty-n} holds. From the assumption above we have $a^* <1$, therefore there is $k_0\in \N_0$ such that
    \begin{equation*}
        e^{\frac{-(k_0+1)}{p}}< a^*_1 \leq e^\frac{-k_0}{p}.
    \end{equation*}
    For every $k\geq k_0$ set $n_k = \min\{n\in\N: a_n^* \leq e^\frac{-k}{p}\}$. Because $a_n^*>0$, then $\{n_k\}$ is unbounded, therefore $n_k \nearrow \infty$. From the definition of $n_k$ we have
    \begin{equation} \label{E:approx-a^p}
        \frac{1}{e} e^{-k} < (a_{n}^*)^p \leq e^{-k}
    \end{equation}
    \begin{equation} \label{E:approx-log(a)}
        \frac{1}{p^\lambda} k^\lambda < (1-\log(a_{n}^*))^\lambda \leq \left(\frac{3}{p}\right)^\lambda k^\lambda
    \end{equation}
    for every $n\in \{n_k, \dots , n_{k+1}-1\}$. Now using \eqref{E:approx-a^p} and \eqref{E:approx-log(a)} we get
    \begin{align*}
        \sum_{n=1}^\infty (a_n^*)^p (1-\log(a_n^*))^\lambda &
        =\sum_{k=k_0}^\infty \sum_{n=n_k}^{n_{k+1}-1} (a_n^*)^p (1-\log(a_n^*))^\lambda \\
        &\ls \sum_{k=k_0}^\infty e^{-k} k^\lambda (n_{k+1} - n_k) 
        =  \sum_{k=k_0}^\infty e^{-k} a_k b_k,
    \end{align*}
    where $a_k = (\frac{k}{2})^\lambda, b_k = 2^\lambda(n_{k+1} - n_k)$. Using Lemma \ref{L:Young-inequality} on $a_k,b_k$ we get
    \begin{align*}
        \sum_{k=k_0}^\infty e^{-k} a_k b_k &
        \leq \sum_{k=k_0}^\infty e^{\frac{-k}{2}} + \sum_{k=k_0}^\infty e^{-k} 2^\lambda (n_{k+1}-n_k)(1+\log(2^\lambda (n_{k+1}-n_k)) \\
        &= S_1 + S_2,
    \end{align*}
    where $S_1<\infty$ and
    \begin{equation*}
        S_2 < \infty \iff \sum_{k=k_0}^\infty e^{-k} (n_{k+1} -n_k) (1+\log(n_{k+1} -n_k))^\lambda <\infty.
    \end{equation*}
    For every $k\geq k_0$ we have
    \begin{align} \label{E:n_k-estimate}
        &(n_{k+1} -n_k) (1+\log(n_{k+1} -n_k))^\lambda \nonumber
        \leq (n_{k+1} -n_k) (1+\log(n_{k+1}))^\lambda \\ 
        &\leq  n_{k+1} (1+\log(n_{k+1}))^\lambda - n_k (1+\log(n_{k}))^\lambda.
    \end{align}
    Find $k_1\geq k_0$ such that $\lambda\leq 1+\log(n_{k_1})$. Thus by using \eqref{E:n_k-estimate} and the fact that
    \begin{equation*}
        \frac{d}{dt}\left(t(1+\log(t))^\lambda\right)= (1+\log(t))^{\lambda-1}(1+\log(t) +\lambda),
    \end{equation*}
    we get
    \begin{align*}
        &\sum_{k=k_0}^\infty e^{-k} (n_{k+1} -n_k) (1+\log(n_{k+1} -n_k))^\lambda \\
        &\leq \sum_{k=k_0}^\infty e^{-k} n_{k+1} (1+\log(n_{k+1}))^\lambda - n_k (1+\log(n_{k}))^\lambda\\
        &=  \sum_{k=k_0}^\infty e^{-k} \int_{n_k}^{n_{k+1}} (1+\log(t))^{\lambda-1}(1+\log(t) +\lambda) \\
        &\ls C + \sum_{k=k_1}^\infty e^{-k} \int_{n_k}^{n_{k+1}} (1+\log(t))^{\lambda} 
        \leq C + \sum_{k=k_1}^\infty e^{-k} \sum_{n=n_k}^{n_{k+1}-1} (1+\log(n))^{\lambda}\\
        &\ls C +\sum_{k=k_1}^\infty \sum_{n=n_k}^{n_{k+1}-1} (a_n^*)^p (1+\log(n))^{\lambda}
        = C + \sum_{n=n_{k_1}}^\infty (a_n^*)^p (1+\log(n))^{\lambda} <\infty,
    \end{align*}
    because $C$ is a finite sum, which is always less than infinity. Thus we get what we desired.
    \newline
\end{proof}

\begin{lemma} \label{L:Odhad-p=infty}
    Let $\alpha>0$, $A$ Young function such that $A (t)\sim \exp(-t^{-\frac{1}{\alpha}})$ and $a\in\mathcal{M}$. Then $\|a\|_{\infty,\infty;\alpha}<\infty$ if and only if $\|a\|_A <\infty$.
\end{lemma}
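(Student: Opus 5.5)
Both conditions are finiteness statements about $a^*$: $\|a\|_{\infty,\infty;\alpha}<\infty$ means $a_n^*\le M(1+\log n)^{-\alpha}$ for all $n$, and by rearrangement invariance $\|a\|_A<\infty$ means $\sum_n A(a_n^*/\lambda)<\infty$ for some $\lambda>0$. I would first reduce to the model function $B(t)=\exp(-t^{-1/\alpha})$ near $0$. Since $A\sim B$, there are $c,C,t_1>0$ with $B(ct)\le A(t)\le B(Ct)$ on $[0,t_1)$. Both conditions force $a_n^*\to0$, using Proposition~\ref{P:Orlicz-for-t0} (iii) for $\ell^A$ (here $t_0=0$ because $B>0$ on $(0,\infty)$). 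Finitely many terms and the scaling constants $c,C$ can be absorbed into $\lambda$. So it suffices to prove
\begin{equation*}
    \sup_n a_n^*(1+\log n)^{\alpha}<\infty \iff \exists \lambda>0:\ \sum_{n} \exp\bigl(-(\lambda/a_n^*)^{1/\alpha}\bigr)<\infty .
\end{equation*}

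For ($\Rightarrow$), suppose $a_n^*\le M(1+\log n)^{-\alpha}$. Then $(\lambda/a_n^*)^{1/\alpha}\ge (\lambda/M)^{1/\alpha}(1+\log n)$. Choosing $\lambda$ with $(\lambda/M)^{1/\alpha}=2$ gives $\exp(-(\lambda/a_n^*)^{1/\alpha})\le e^{-2}n^{-2}$, which is summable. By monotonicity of $A$ this yields finiteness of the modular, hence $\|a\|_A<\infty$.

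For ($\Leftarrow$), suppose $S=\sum_n B(a_n^*/\lambda)<\infty$. The sequence $B(a_n^*/\lambda)$ is non-increasing because $B$ is increasing and $a^*$ is non-increasing. Therefore
\begin{equation*}
    n\,B(a_n^*/\lambda)\le\sum_{k\le n}B(a_k^*/\lambda)\le S,
\end{equation*}
so $\exp(-(\lambda/a_n^*)^{1/\alpha})\le S/n$. Taking logarithms gives $(\lambda/a_n^*)^{1/\alpha}\ge \log n-\log S$. For large $n$ the right-hand side is at least $\tfrac12(1+\log n)$, hence $a_n^*\le \lambda 2^{\alpha}(1+\log n)^{-\alpha}$. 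The finitely many remaining $n$ are bounded by $a_1^*$, so $\|a\|_{\infty,\infty;\alpha}<\infty$.

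The main obstacle is the bookkeeping in the reduction. The equivalence $\sim$ holds only on a neighborhood of $0$, so I need to make sure that eventually $a_n^*/\lambda<t_1$ and that the constants $c,C$ only rescale $\lambda$. For ($\Rightarrow$), I also need the tail estimate to give a modular that is finite rather than $\le1$. That suffices, by the remark after Definition~\ref{D:luxemburg-norm} that finiteness of the sum for some $\lambda$ characterizes membership in $\ell^A$. The monotonicity argument in ($\Leftarrow$) is the key estimate, and it is elementary.
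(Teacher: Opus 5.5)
Your proof is correct and follows essentially the paper's route. For ($\Rightarrow$) you bound the modular by a convergent series $\sum n^{-s}$ with $s>1$, and for ($\Leftarrow$) you use monotonicity of the terms to get $n\,B(a_n^*/\lambda)\lesssim 1$; you do this directly over $k\le n$, where the paper argues by contradiction over the block $\lfloor n_k/2\rfloor\le k\le n_k$. One bookkeeping point: in ($\Rightarrow$), choose $\lambda$ so large that $a_1^*/\lambda<t_1$ (possible since $a_1^*\le M$), so that every term, not just eventually, lies where $A(t)\le B(Ct)$, because a Young function may equal $\infty$ outside that range.
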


\begin{myproof}
    Let $a \in \mathcal{M}$ be such that
    \begin{equation*}
        \|a\|_{\infty,\infty;\alpha} = \sup_{n\in \N} a^*_n (1+\log(n))^\alpha = K <\infty.
    \end{equation*}
    Then for every $n\in\N$ we have $a_n^* \leq \frac{K}{(1+\log(n))^\alpha}$. Let $c,C>0$ such that $\exp\left(-(ct)^{-\frac{1}{\alpha}}\right) \leq A(t) \leq \exp\left(-(Ct)^{-\frac{1}{\alpha}}\right)$ on some positive neighborhood of $0$ $U$. Find $\lambda >2KC$ such that $\frac{a^*_n}{\lambda}\in U$ for every $n\in\N$. From the fact that $\ell^A$ is \ri we have
    \begin{align*}
        \sum_{n=1}^\infty A\left(\left|{\frac{a_n}{\lambda}}\right|\right) &\leq \sum_{n=1}^\infty \exp\left(-\left(\frac{C a_n^*}{\lambda}\right)^{-\frac{1}{\alpha}}\right)
        \leq \sum_{n=1}^\infty \exp\left(-\left(2^{\frac{1}{\alpha}}(1+\log(n))\right)\right) \\
        &\approx \sum_{n=1}^\infty \exp \left(\log\left(n^{-2^\frac{1}{\alpha}}\right)\right)
        = \sum_{n=1}^\infty n^{-2^\frac{1}{\alpha}} < \infty
    \end{align*}
    because $-2^\frac{1}{\alpha}<-1$. Therefore $\|a\|_A <\infty$.

    Now suppose $a\in \ell^A$. Then there is $\lambda >0$ such that $\frac{|a_n|}{\lambda}\in U$ and $\sum_{n=1}^\infty A(\abs{\frac{a_n}{\lambda}}) \leq 1$. For contradiction suppose that there exists an increasing sequence $\{ n_k\}_{k=1}^\infty$ of natural numbers such that $a^*_{n_k}(1+\log(n_k))^\alpha \to \infty$ for $k\to \infty$. This gives us:
    \begin{align*}
        1 &\geq \sum_{n=1}^\infty A\left(\frac{a^*_n}{\lambda}\right) 
        \geq \sum_{n=\floor{\frac{n_k}{2}}}^{n_k} \exp\left(-\left(\frac{ca_n^*}{\lambda}\right)^{-\frac{1}{\alpha}}\right)
        \\
        &\gs n_k \exp\left(-\left(\frac{ca_{n_k}^* (1+\log(n_k))^\alpha}{\lambda 2^\alpha}\right)^{-\frac{1}{\alpha}} \frac{1+\log(n_k)}{2}\right) \\
        &=n_k^\frac{1}{2} \exp\left(-\left(\frac{ca_{n_k}^* (1+\log(n_k))^\alpha}{\lambda 2^\alpha}\right)^{-\frac{1}{\alpha}} \right) 
        \to \infty
        \quad \text{as $k\to \infty$.}
    \end{align*}
    This gives us the contradiction.
\end{myproof}

Now we can formulate and prove the final result.

\begin{theorem} \label{T: Charakterization-of-intersection}
    Let $p,q\in (0,\infty], \alpha \in \R$ and $A$ a Young function. Then $\ell^{p,q;\alpha} = \ell^A$ if and only if one of the following conditions holds:
    \begin{enumerate}
        \item [\textup{(1)}]$1<p<\infty, q=p, \alpha\in\R$ and $A (t)\sim t^p(1-\log(t))^{\alpha p}$,
        \item [\textup{(2)}]$p=q=1, \alpha \leq0$ and $A(t) \sim t(1-\log(t))^{\alpha}$,
        \item [\textup{(3)}]$p=q=\infty, \alpha>0$ and $A(t) \sim \exp(-t^{-\frac{1}{\alpha}})$,
        \item [\textup{(4)}]$p=\infty, q<\infty, \alpha<-\frac{1}{q}$ or $p=q=\infty, \alpha\leq0$ and $A \sim 0$.
    \end{enumerate}
\end{theorem}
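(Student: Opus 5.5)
We split the argument into sufficiency, where each of (1)--(4) is verified, and necessity, which rests on the fundamental sequence together with Lemma~\ref{L:L-Z-difference}.

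\emph{Sufficiency.} Case (4) is immediate. By Proposition~\ref{P:fundmental-sequence-L-Z} (iv),(vi) the space $\ell^{p,q;\alpha}$ has $\varphi\approx 1$, so it coincides with $\ell^\infty$ by Proposition~\ref{P:embeding-and-fundamental-sequence}. If $A\sim 0$, then $A$ vanishes near $0$, so $t_0>0$ and $\ell^A=\ell^\infty$ by Proposition~\ref{P:Orlicz-for-t0} (ii).

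Case (3) follows from Lemma~\ref{L:Odhad-p=infty}, which gives equality of the two sets. For cases (1) and (2) we combine rearrangement invariance of $\ell^A$ with $A(t)\sim t^p(1-\log t)^{\alpha p}$. These show that $a\in\ell^A$ if and only if $\sum (a_n^*/\lambda)^p(1-\log(a_n^*/\lambda))^{\alpha p}<\infty$ for some $\lambda$, and rescaling by $\lambda$ does not affect finiteness.

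\begin{itemize}
\item For $\alpha\ge0$ we apply Lemma~\ref{L:odhad-p<infty} with exponent $\lambda=\alpha p$. This converts the condition to $\sum (a_n^*)^p(1+\log n)^{\alpha p}<\infty$, which is exactly $\|a\|_{p,p;\alpha}^p<\infty$.
\item For $\alpha<0$ the lemma is not stated, so we redo its argument. One direction uses $a_n^*\lesssim n^{-1/p}$, which holds since both conditions force $a^*\in\ell^{p,\infty}$ up to the logarithm. The other direction uses the dyadic blocks $n_k$, where the inequalities reverse accordingly.
\end{itemize}

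This gives equality of the sets. Since both spaces are r.i.\ (quasi-)Banach sequence spaces, set equality upgrades to equivalence of (quasi)norms by the standard closed-graph/Fatou-property argument for spaces with (P3) (cf.\ \cite[Chapter 1]{Ben:88}). The same argument also justifies the set equality in (3) and (4).

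\emph{Necessity.} Assume $\ell^{p,q;\alpha}=\ell^A$. If $t_0>0$, then $\ell^A=\ell^\infty$, so $\varphi_{\ell^{p,q;\alpha}}\approx1$. Proposition~\ref{P:fundmental-sequence-L-Z} then forces exactly the parameter ranges in (4), and $A\sim 0$.

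If $t_0=0$, then $\ell^A\subset c_0$ by Proposition~\ref{P:Orlicz-for-t0} (iii), so $\varphi_{\ell^{p,q;\alpha}}\to\infty$. We split according to $p$.

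\begin{itemize}
\item Suppose $p<\infty$. We must show $q=p$. We first show $p\ge1$: the space $\ell^A$ is normable, and $\ell^{p,q;\alpha}$ with $p<1$ is not, which one sees by testing the triangle inequality on disjoint unit vectors $\chi_{\{k\}}$, since $\varphi(n)\approx n^{1/p}(\dots)$ violates the quasi-concavity $\varphi(n)/n$ non-increasing.

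Next, define the Young function $B$ from Example~\ref{L:Young-functions}(a), with $B(t)\sim t^p(1-\log t)^{\alpha p}$; when $p=1$ this requires $\alpha\le0$, which we justify below. Then $\varphi_{\ell^B}(n)=1/B^{-1}(1/n)\approx n^{1/p}(1+\log n)^{\alpha}\approx\varphi_{\ell^A}$. By Proposition~\ref{T:Charakterizacion-Orlicz-Embedings} this gives $\ell^A=\ell^B$ and $A\sim B$.

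By the sufficiency part, $\ell^B=\ell^{p,p;\alpha}$. Hence $\ell^{p,q;\alpha}=\ell^{p,p;\alpha}$, and Lemma~\ref{L:L-Z-difference} forces $q=p$.

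For $p=1$ with $\alpha>0$, the space $\ell^{1,q;\alpha}$ has $\varphi(n)/n\approx(1+\log n)^\alpha\to\infty$. This contradicts the quasi-concavity of the fundamental sequence of the BSS $\ell^A$, so no Orlicz space can match it. The same quasi-concavity argument excludes $p<1$.

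\item Suppose $p=\infty$. Then $\varphi$ is a power of $\log n$ or of $\log\log n$, by Proposition~\ref{P:fundmental-sequence-L-Z} (ii),(iii),(v). In case (v) we take $B$ from Example~\ref{L:Young-functions}(b). Its fundamental sequence is $\approx(1+\log n)^{\alpha}$, so $A\sim B$ and $\ell^A=\ell^{\infty,\infty;\alpha}$ by (3), giving $q=\infty$ as required.

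In cases (ii) and (iii) with $q<\infty$, we determine $A$ from $\varphi$ and compare with a Young function $B$ of the appropriate exponential type. This yields $\ell^A=\ell^{\infty,\infty;\alpha+1/q}$, which contradicts Lemma~\ref{L:L-Z-difference}. For case (iii), where $\varphi\approx(\log\log n)^{1/q}$, we construct $B(t)\sim\exp(-\exp(t^{-q}))$ directly and show $\ell^B\ne\ell^{\infty,q;-1/q}$. The test sequence is $a_n=(1+\log(1+\log n))^{v}$ with $v$ between $-1/q$ and $0$: it lies in the Orlicz space but not in the Lorentz--Zygmund space.
\end{itemize}

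The main obstacle is this last step, together with the $\alpha<0$ version of Lemma~\ref{L:odhad-p<infty}. For case (iii) we first try building $B$ via $B^{-1}(1/n)=1/\varphi(n)$, which produces the correct growth, and then compute $\sum B(a_n/\lambda)$ directly on the test sequence.
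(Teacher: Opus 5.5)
Your overall plan follows the paper's: case-by-case sufficiency, upgrading set equality to coincidence, and necessity via fundamental sequences, Proposition~\ref{T:Charakterizacion-Orlicz-Embedings} and Lemma~\ref{L:L-Z-difference}. However, the step you flag as the main obstacle fails as written.

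Take the case $p=\infty$, $q<\infty$, $\alpha=-\frac1q$. The sequence $a_n=(1+\log(1+\log n))^{v}$ with $-\frac1q<v<0$ is indeed outside $\ell^{\infty,q;-1/q}$. But the modular $\sum_n B(a_n/\lambda)$ diverges for every $\lambda>0$, so $a\notin\ell^B$ and it cannot separate the spaces.

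The quickest way to see this uses a general fact. In every r.i.\ space $X$ one has $a_n^*\chi_{[1,n]}\le a^*$, hence $a_n^*\varphi_X(n)\le\nrm{a}_X$. Here $a_n\varphi_{\ell^B}(n)\approx(1+\log(1+\log n))^{v+1/q}\to\infty$, so $a\notin\ell^B$.

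You can also check it directly. Put $L_n=1+\log(1+\log n)$ and $s=-vq\in(0,1)$; then $(a_n/\lambda)^{-q}=\lambda^q L_n^{s}$. Since $L_n^s=o(L_n)$, after inserting the lower constant $c$ from $B\sim\exp(-e^{t^{-q}})$ you still get $e^{(\lambda/c)^qL_n^s}\le 1+\log n$ for large $n$. Hence $B(a_n/\lambda)\ge (en)^{-1}$ eventually.

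The general fact also shows what a separating element must look like. Both spaces are r.i.\ with the same fundamental sequence, so any separating element must satisfy $a_n^*\lesssim 1/\varphi(n)\approx L_n^{-1/q}$. That excludes every $v>-\frac1q$.

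The fix is the endpoint $v=-\frac1q$, which is the paper's choice. Take $a_n=L_n^{-1/q}$; it is non-increasing, so $a=a^*$. Then $\nrm{a}_{\infty,q;-1/q}^q=\sum_n \frac{1}{n(1+\log n)(1+\log(1+\log n))}=\infty$. On the other hand, $e^{(\lambda/C)^q L_n}\ge e^{L_n}=e(1+\log n)$ once $\lambda\ge C$. Choosing $\lambda$ also large enough that all $a_n/\lambda$ lie in the neighbourhood where $B(t)\le\exp(-e^{(Ct)^{-q}})$, you get $B(a_n/\lambda)\le e^{-e}n^{-e}$. This is summable, so $a\in\ell^B\setminus\ell^{\infty,q;-1/q}$.

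Apart from this, your only real deviation is sufficiency for $\alpha<0$ in (1) and (2). The paper does not re-prove Lemma~\ref{L:odhad-p<infty}; it uses duality, $\ell^{p,p;\alpha}=(\ell^{p',p';-\alpha})'=(\ell^{\widetilde A})'=\ell^A$, via Proposition~\ref{P:Dualita-L-Z} and Example~\ref{L:Young-functions}(a). It obtains (2) for $\alpha<0$ from (3) in the same way.

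Your direct route is viable and more elementary. But merely reversing the inequalities in the dyadic-block argument does not control indices where $a^*$ decays faster than every power of $1/n$. You need an explicit split, assuming $a^*<1$. Terms with $a_n^*<n^{-2/p}$ satisfy $(a_n^*)^p(1+\log n)^{\alpha p}\le n^{-2}$. For the remaining terms $1-\log a_n^*\lesssim 1+\log n$, which is the comparison you need since $\alpha p<0$.
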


\begin{myproof}
    Suppose (1) holds and first let $\alpha\geq0$. Then by using Lemma~\ref{L:odhad-p<infty} and Proposition \ref{T:Charakterizacion-Orlicz-Embedings} for $a\in\mathcal{M}$ we have:
    \begin{align*}
        &\|a\|_{p,p;\alpha}^p = \sum_{n=1}^\infty (a_n^*)^p (1+\log(n))^{p\alpha} <\infty \\
        &\iff \sum_{n=1}^\infty (a_n^*)^p (1-\log(a_n^*))^{p\alpha} <\infty\\
        &\iff \|a\|_A <\infty.
    \end{align*}
    Because $\ell^A$ is BSS and $\ell^{p,p;\alpha}$ coincides with a BSS, we get $\ell^A = \ell^{p,p;\alpha}$. This fact comes from \cite[Chapter 1, Theorem 1.8]{Ben:88}.

    Now let $\alpha<0$. From Proposition~\ref{P:Dualita-L-Z}, the form of an associate space of an Orlicz space and the already proven case for $\alpha\geq0$ we get
    \begin{equation*}
        \ell^{p,p,\alpha} = \left(\ell^{p',p',-\alpha}\right)' = \left(\ell^{\widetilde{A}}\right)' = \ell^A.
    \end{equation*}

    Suppose (3) holds. Then by using Proposition \ref{T:Charakterizacion-Orlicz-Embedings} and Lemma \ref{L:Odhad-p=infty} for $a\in\mathcal{M}$ we have
    \begin{equation*}
        \|a\|_{\infty,\infty;\alpha} <\infty \iff \|a\|_A <\infty.
    \end{equation*}
    Because $\ell^{\infty,\infty;\alpha}$ coincides with a BSS, we again get $\ell^{\infty,\infty;\alpha} = \ell^A$.

    Now suppose (2) holds. Let $\alpha<0$. From Proposition \ref{P:Dualita-L-Z} and case (3) we have
    \begin{equation*}
        \ell^{1,1;\alpha} = \left(\ell^{\infty,\infty;-\alpha}\right)' = \left(\ell^{\widetilde{A}}\right)' = \ell^A.
    \end{equation*}
    If $\alpha=0$, then by using Lemma \ref{L:odhad-p<infty} and Proposition \ref{T:Charakterizacion-Orlicz-Embedings} we get $\|a\|_{1,1;0} < \infty$ iff $\|a\|_A < \infty$ for every $a\in\mathcal{M}$. Then by the same argument as above we get $\ell^{1,1;0} = \ell^A$.

    Suppose (4) holds. From Proposition \ref{P:fundmental-sequence-L-Z} we get $\varphi_{\ell^{\infty,q;\alpha}} \approx 1$, thus by Propositions \ref{P:embeding-and-fundamental-sequence} and \ref{P:Orlicz-for-t0} (ii) we have $\ell^{\infty,q;\alpha} = \ell^\infty = \ell^A$.
    
    \medskip
    
    To prove the necessity of (1)--(4) suppose there is $B$ a Young function such that $\ell^{p,q;\alpha} = \ell^B $ and the conditions do not hold.

    If $p<1$, we have from Proposition \ref{P:fundmental-sequence-L-Z} that $\varphi_{\ell^{p,q;\alpha}}(n) \approx n^\frac{1}{p}(1+\log(n))^\alpha$. Therefore we get
    \begin{equation} \label{E:fundamental-limit-infty}
        \lim_{n\to\infty} \frac{\varphi_{\ell^{B}(n)}}{n}
        \geq \lim_{n\to\infty} c\frac{\varphi_{\ell^{p,q;\alpha}}(n)}{n} 
        \geq \lim_{n\to\infty} C n^{\frac{1}{p}-1} (1+\log(n))^\alpha = \infty.
    \end{equation}
    Thus $\frac{\varphi_{\ell^B}(n)}{n}$ is not non-increasing, which is a contradiction with quasi-concavity of a fundamental sequence.

    If $p=1, \alpha>0$, we again have from Proposition \ref{P:fundmental-sequence-L-Z} $\varphi_{\ell^{1,q;\alpha}}(n) \approx n(1+\log(n))^\alpha$ and using the same argument as in \eqref{E:fundamental-limit-infty} we have a similar contradiction.

    If $p=1, \alpha\leq0, q\neq p$, then from Lemma \ref{L:L-Z-difference} we have $\ell^{1,q;\alpha} \neq \ell^{1,1;\alpha}$. From (2) we have $A$ a Young function such that $\ell^{1,1;\alpha}= \ell^A$. By using Proposition \ref{P:fundmental-sequence-L-Z} we get
    \begin{equation*}
        \varphi_{\ell^B} \approx \varphi_{\ell^{1,q;\alpha}} \approx \varphi_{\ell^{1,1;\alpha}} \approx \varphi_{\ell^A}.
    \end{equation*}
    Therefore by using Proposition \ref{T:Charakterizacion-Orlicz-Embedings} we have:
    \begin{equation*}
        \ell^{1,q;\alpha} = \ell^B = \ell^A = \ell^{1,1;\alpha},
    \end{equation*}
    giving us the contradiction.

    If $1<p<\infty, q\neq p$, then again from Lemma \ref{L:L-Z-difference} we get $\ell^{p,q;\alpha} \neq \ell^{p,p;\alpha}$. Similarly as above from (1) we have a Young function $A$ such that $\ell^{p,p;\alpha} = \ell^A$. By using Proposition \ref{P:fundmental-sequence-L-Z} we have
    \begin{equation*}
        \varphi_{\ell^B} \approx \varphi_{\ell^{p,q;\alpha}} \approx \varphi_{\ell^{p,p;\alpha}} \approx \varphi_{\ell^A}.
    \end{equation*}
    Thus from Proposition \ref{T:Charakterizacion-Orlicz-Embedings} we get
    \begin{equation*}
        \ell^{p,q;\alpha} = \ell^B = \ell^A = \ell^{p,p;\alpha},
    \end{equation*}
    which is a contradiction to Lemma \ref{L:L-Z-difference}.

    The case $p=\infty, q\neq p, \alpha> -\frac{1}{q}$ is proven using the same method, but using (3) to prove that $\ell^{\infty,\infty;\alpha}$ is an Orlicz space.

    Finally suppose that $p=\infty$, $q<\infty$ and $\alpha = -\frac{1}{q}$. From the formula of the fundamental sequence of an Orlicz space and Proposition \ref{P:fundmental-sequence-L-Z} we get
    \begin{equation*}
        \frac{1}{B^{-1}\left(\frac{1}{n}\right)} \approx (1+\log(1+\log(n)))^{-\alpha}.
    \end{equation*}
    By a simple calculation we get
    \begin{equation} \label{E:Young-approx-loglog}
        B(t) \sim \exp\left(-e^{t^{\frac{1}{\alpha}}-1}\right).
    \end{equation}
    Set $a_n = (1+\log(1+\log(n)))^{-\frac{1}{q}}$, from Proposition \ref{P:rearrangement-of-non-increasing} (i) we have $a = a^*$. Also:
    \begin{equation*}
        \|a\|_{\infty,q;\alpha}^q = \sum_{n=1}^\infty n^{-1} (1+\log(n))^{-1} (1+\log(1+\log(n)))^{-1} = \infty.
    \end{equation*}
    Let us show that there exists $\lambda>0$ such that
    \begin{equation*}
        \sum_{n=1}^\infty \exp\left(-e^{\left(\frac{|a_n|}{\lambda}\right)^{\frac{1}{\alpha}}-1}\right) <\infty.
    \end{equation*}
    Let $\lambda>1$, then we have
    \begin{align*}
        \sum_{n=1}^\infty \exp\left(-e^{\left(\frac{|a_n|}{\lambda}\right)^{\frac{1}{\alpha}}-1}\right)
        &= \sum_{n=1}^\infty \exp\left(-e^{(\lambda^{-\frac{1}{\alpha}} -1)}e^{\log(1+\log(n))}\right)\\ 
        &= \sum_{n=1}^\infty \exp\left(-e^{(\lambda^{-\frac{1}{\alpha}}-1)}(1+\log(n))\right)\\
        &= \sum_{n=1}^\infty \exp\left(-e^{(\lambda^{-\frac{1}{\alpha}} -1)}\right) n^{-e^{(\lambda^{-\frac{1}{\alpha}} -1)}} <\infty,
    \end{align*}
    because $-e^{(\lambda^{-\frac{1}{\alpha}} -1)}<-1$. Thus by \eqref{E:Young-approx-loglog} we get $a\in\ell^B$. 

    If $p,q,\alpha$ satisfy one of the conditions from (1)-(4), then from the sufficient condition and Proposition \ref{T:Charakterizacion-Orlicz-Embedings} we have $B\sim A$, which concludes the proof.
\end{myproof}

\section*{Declarations}

\textbf{Competing Interests:} The author declares no competing interests.

\textbf{Funding Information:} This research was supported in part by the Grant no.~23-04720S from the Czech Science Foundation and by the Grant no. 26-21107S from the Czech Science Foundation.

\textbf{Author contribution:} This is a single author paper.

\textbf{Data Aviability statement:} Data is entirely included in the paper.

\bibliographystyle{abbrv}
\bibliography{bibliography}

\end{document}